\def\version{01/09/2008}

\documentclass[a4paper,11pt]{amsart}
\usepackage{amssymb,amscd,amsxtra}
\usepackage{fullpage}
\usepackage[all]{xy}
\usepackage{hyperref}

\theoremstyle{plain}
\newtheorem{thm}{Theorem}[section]
\newtheorem{lem}[thm]{Lemma}
\newtheorem{prop}[thm]{Proposition}
\newtheorem{cor}[thm]{Corollary}

\theoremstyle{definition}
\newtheorem{rem}[thm]{Remark}

\numberwithin{equation}{section}

\hyphenation{hom-o-l-ogy hom-o-l-og-ies co-hom-o-l-og-ies
co-hom-o-l-ogy homo-mor-ph-ism mono-mor-ph-ism epi-mor-ph-ism
iso-mor-ph-ism co-mod-ule bi-co-mod-ule Hur-e-wicz alg-e-br-oid
co-ten-s-or Hat-t-ori co-ass-oc-ia-tiv-ity John-son--Wil-son
or-ie-nt-a-b-il-ity co-fin-al eq-ui-va-r-i-ant Liu-lev-ic-ius
Hil-ton--Mil-nor Steen-rod qua-si-sym-met-ric}


\def\ie{\emph{i.e.}}
\def\ds{\displaystyle}
\def\:{\colon}
\def\.{\cdot}

\def\<{\left\langle}
\def\>{\right\rangle}
\def\({\left(}
\def\){\right)}
\def\ph#1{\phantom{#1}}
\def\epsilon{\varepsilon}

\def\leq{\leqslant}
\def\geq{\geqslant}
\def\lla{\longleftarrow}

\def\lra{\longrightarrow}
\def\Lra{\Longrightarrow}
\def\ra{\rightarrow}
\def\hat#1{\widehat{#1}}

\def\iso{\cong}

\def\F{\mathbb{F}}

\def\k{\Bbbk}

\def\Z{\mathbb{Z}}

\def\ideal{\triangleleft}

\DeclareMathOperator*{\Gal}{Gal}

\DeclareMathOperator{\Tor}{Tor}

\def\id{\mathrm{id}}
\DeclareMathOperator*{\colim}{colim}

\DeclareMathOperator*{\holim}{holim}

\def\nr{\mathrm{nr}}

\def\Fc{\bar{\F}}
\def\Enr{E^\nr}
\def\Knr{K^\nr}

\def\h{h}
\def\phi{\varphi}
\newdir{ >}{{}*!/-5pt/@{>}}
\DeclareMathOperator{\ann}{ann}


\newdir{ >}{{}*!/-8pt/@{>}}

\begin{document}
\title{Galois extensions of Lubin-Tate spectra}
\author{Andrew Baker and Birgit Richter}
\address{Department of Mathematics, University of Oslo,
Norway.
\hfill\newline
{\ }Department of Mathematics, University of Glasgow,
Glasgow G12 8QW, Scotland. }
\email{a.baker@maths.gla.ac.uk}
\urladdr{http://www.maths.gla.ac.uk/$\sim$ajb}
\address{Department Mathematik der Universit\"at Hamburg,
20146 Hamburg, Germany.}
\email{richter@math.uni-hamburg.de}
\urladdr{http://www.math.uni-hamburg.de/home/richter/}
\subjclass[2000]{Primary 55P43, 13B05, 13K05; Secondary 55P60, 55N22}
\keywords{Galois extensions, separable closure, Witt vectors,
Lubin-Tate spectra}
\thanks{
\\
This paper is dedicated to Doug Ravenel and Steve Wilson who
have led algebraic topologists into brave new lands. \\
A.~Baker was partially supported by a YFF Norwegian
Research Council grant while visiting the University of Oslo.
We would like to thank John Rognes, Haynes Miller and the
referee for important comments on an early version, and Mark
Hovey for allowing us to include material appearing in
Section~\ref{sec:knlocal}. We also thank Bj\"orn Schuster
for sharing a stimulating bottle of wine in Oberwolfach.}
\date{\version}
\begin{abstract}
Let $E_n$ be the $n$-th Lubin-Tate spectrum at a prime $p$. There
is a commutative $S$-algebra $\Enr_n$ whose coefficients are built
from the coefficients of $E_n$ and contain all roots of unity whose
order is not divisible by~$p$. For odd primes~$p$ we show that
$\Enr_n$ does not have any non-trivial connected finite Galois
extensions and is thus separably closed in the sense of Rognes. At
the prime~$2$ we prove that there are no non-trivial connected Galois
extensions of $\Enr_n$ with Galois group a finite group $G$ with
cyclic quotient. Our results carry over to the $K(n)$-local context.
\end{abstract}

\maketitle

\section{Introduction}

For a prime~$p$, let $E_n$ be the Lubin-Tate spectrum whose coefficient
ring is
\[
\pi_*(E_n) = W\F_{p^n}[[u_1,\ldots,u_{n-1}]][u^{\pm 1}],
\]
where $u$ is an element of degree $-2$ and the $u_i$ have degree zero.
For a perfect field $\k$, $W\k$ denotes the ring of Witt vectors of
$\k$. The ring $W\F_{p^n}[[u_1,\ldots,u_{n-1}]]$ represents deformations
of the height~$n$ Honda formal group law over $\F_{p^n}$. The spectrum
$E_2$ features prominently in the work of Goerss, Henn, Mahowald and
Rezk~\cite{GHMR} on the calculation of the homotopy groups of the
$K(2)$-local sphere. Goerss, Hopkins and Miller~\cite{Re, GH} establish
an action of the extended Morava stabilizer group $\mathbb{G}_n$ by
$E_\infty$-maps on $E_n$. Work by Devinatz and Hopkins~\cite{DH} on
homotopy fixed point spectra identifies the $K(n)$-local sphere spectrum
$L_{K(n)}S$ as the homotopy fixed points of the action of $\mathbb{G}_n$
on $E_n$ and Rognes~\cite[section 5.4.1]{R} interprets the map
\[
L_{K(n)}S \simeq E_n^{\h \mathbb{G}_n} \lra E_n
\]
as a $K(n)$-local Galois extension with Galois group $\mathbb{G}_n$.

As observed in~\cite[section 5.4.1]{R}, there is a $K(n)$-local
Galois extension $E_n\lra\Enr_n$ obtained by adjoining all roots of
unity of order prime to~$p$ and then suitably completing the result,
so that
\[
\pi_*(\Enr_n) = W{\Fc_p}[[u_1,\ldots,u_{n-1}]][u^{\pm 1}],
\]
where $\Fc_p$ is the algebraic closure of $\F_{p^n}$. See
Section~\ref{sec:Ennr} for more details on $\Enr_n$.

Usually $\Enr_n$ is thought of as the maximal (abelian) unramified
extension of $E_n$ and our goal is to investigate the extent to which
it deserves this name. The coefficients do not allow for non-trivial
connected Galois extensions of graded commutative rings, and we will
show that there are no non-trivial connected finite Galois extensions
of $\Enr_n$ as a commutative $S$-algebra, at least if we work away
from the prime~$2$. Here, we use the notion of connectedness in the
sense of Rognes~\cite[10.2]{R}, thus a connected commutative $S$-algebra
is one without non-trivial idempotents. This is crucial because for
every commutative $S$-algebra $A$ we can always consider the trivial
$G$-Galois extension $A \lra \prod_G A$ for an arbitrary finite group~$G$.
We will recall some basic facts about connectedness in
Section~\ref{sec:Galoisthy}.

Our main result confirms Rognes'~\cite[conjecture 1.4]{R}.
\begin{thm}\label{thm:mainintro}
For an odd prime $p$, let $B/\Enr_n$ be a finite Galois extension
with non-trivial Galois group. Then $B$ is not connected. Hence
$\Enr_n$ is a  maximal connected Galois extension of $E_n$.
\end{thm}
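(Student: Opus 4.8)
The plan is to reduce the statement about commutative $S$-algebras to a statement about the coefficient rings, exploiting the fact that $\pi_*(\Enr_n)$ is a complete local ring with algebraically closed residue field. A finite $G$-Galois extension $B/\Enr_n$ in the sense of Rognes is in particular dualizable and faithful, so $B$ is a retract of a finite cell $\Enr_n$-module and $\pi_*(B)$ is a finitely generated $\pi_*(\Enr_n)$-module. The key structural input should be that $\pi_*(\Enr_n) = W\Fc_p[[u_1,\ldots,u_{n-1}]][u^{\pm 1}]$ is even-periodic with $\pi_0(\Enr_n)$ a complete regular local ring. First I would pass to $\pi_*$ and try to argue that the Galois condition forces $\pi_*(B)$ to be a (graded) $G$-Galois extension of $\pi_*(\Enr_n)$ in the algebraic sense, using that the multiplication and unit maps, together with the idempotent $\int_G$ coming from the Galois structure, behave well on homotopy because everything in sight is flat and even-periodic.

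**Next**, having landed in commutative algebra, I would invoke the already-quoted principle that the coefficients admit no non-trivial connected Galois extensions: a finite $G$-Galois extension of the complete local ring $\pi_0(\Enr_n)$ with algebraically closed residue field $\Fc_p$ must split. Concretely, a connected finite étale (= Galois) extension of a strictly Henselian local ring is trivial, since the residue field being separably (indeed algebraically) closed kills the residue extension, and completeness/Henselianity lifts this splitting back up. Thus any genuinely connected candidate $B$ would have to have $\pi_0(B)$ a strictly Henselian local ring again, and the Galois group would have to act through the trivial automorphism group, contradicting $G \neq 1$ unless $B$ is disconnected.

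**The main obstacle** I anticipate is the passage from the topological Galois condition to the algebraic one — showing that $\pi_*(B)$ really is $G$-Galois over $\pi_*(\Enr_n)$ rather than merely being acted on by $G$ with some weaker coherence. The definition of a $G$-Galois extension of $S$-algebras requires the two maps $B \wedge_{\Enr_n} B \to \prod_G B$ to be weak equivalences, and I would need to compute the homotopy of the left-hand smash product. This is exactly where even-periodicity and flatness should pay off: if $\pi_*(B)$ is flat (which dualizability over an even-periodic ring should give, perhaps after checking projectivity of the finitely generated module), then there is a Künneth isomorphism $\pi_*(B \wedge_{\Enr_n} B) \cong \pi_*(B) \otimes_{\pi_*(\Enr_n)} \pi_*(B)$, and the Galois equivalence descends to the algebraic Galois condition on $\pi_*(B)$. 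Once flatness and this Künneth computation are in hand, the algebraic rigidity of strictly Henselian rings finishes the argument, and disconnectedness of $B$ follows from the corresponding splitting of idempotents in $\pi_0(B)$.

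**I would then conclude** the final clause, that $\Enr_n$ is a maximal connected Galois extension of $E_n$: any connected Galois extension of $E_n$ containing $\Enr_n$ would, by the above, be a connected finite Galois extension of $\Enr_n$ and hence trivial, so no proper enlargement exists.
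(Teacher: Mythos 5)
Your endgame --- reduce to algebra and then use that $\pi_0(\Enr_n)$ is a complete (strictly Henselian) local ring with algebraically closed residue field --- is the right one, and it matches the final stage of the paper's proof (split the separable $\Fc_p$-algebra $\pi_0(B)/\mathfrak{m}$ into $\prod_G\Fc_p$, then lift the idempotents along the complete Noetherian ring $\pi_0(\Enr_n)$). But there is a genuine gap at exactly the step you flag as the main obstacle, and it is not a technicality: dualizability of $B$ over $\Enr_n$ does \emph{not} give flatness or projectivity of $\pi_*(B)$ over $\pi_*(\Enr_n)$, nor does it force $\pi_*(B)$ to be concentrated in even degrees. (Compare: $\Knr_n=\Enr_n/\mathfrak{m}$ is a finite cell $\Enr_n$-module, hence dualizable, and its homotopy is as far from flat over $\pi_*(\Enr_n)$ as possible.) Without flatness the K\"unneth isomorphism $\pi_*(B\wedge_{\Enr_n}B)\iso\pi_*(B)\otimes_{\pi_*(\Enr_n)}\pi_*(B)$ is unjustified, so you cannot conclude that $\pi_*(B)/\pi_*(\Enr_n)$ is an algebraic Galois extension. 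A second symptom of the gap: your argument never uses that $p$ is odd, yet the theorem is open at $p=2$ for general $G$, so oddness of $p$ must enter essentially somewhere.

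The paper's route around this is to base-change to the residue field \emph{first}: by Proposition~\ref{prop:TopGalois-basechange-field}, $\pi_*(\Knr_n)\to\pi_*(\Knr_n\wedge_{\Enr_n}B)$ is an algebraic $G$-Galois extension, where flatness is automatic because $\pi_*(\Knr_n)$ is a graded field. Separability of that extension produces a splitting $\rho$ of multiplication by any nonzero odd-degree element $z$, forcing $\rho(z^2)=z\neq0$; but $z^2=0$ by graded commutativity in odd characteristic --- this is precisely where $p$ odd is used --- so $\pi_*(\Knr_n\wedge_{\Enr_n}B)$ is concentrated in even degrees. A Bockstein spectral sequence argument up the tower of quotients $\Enr_n/(p,u_1,\ldots,u_j)$ then identifies $\pi_0(\Knr_n\wedge_{\Enr_n}B)$ with $\pi_0(B)/\mathfrak{m}$, after which your Henselian/idempotent-lifting conclusion applies verbatim. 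To salvage your approach you would need to supply this evenness step (or an equivalent) before asserting any K\"unneth collapse over $\pi_*(\Enr_n)$ itself.
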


For $p=2$ we show that any finite Galois extension $B/\Enr_n$ whose
Galois group has a cyclic quotient is not connected. At the moment
we are unable to prove that there are no non-trivial connected Galois
extensions of $\Enr_n$ at $p=2$ with a Galois group which has only
finite simple non-abelian quotients.

This result extends our earlier work of~\cite[example~42]{BR:Invt},
in which we showed that each abelian Galois extension $\Enr_n \lra B$
with Galois group whose order is prime to~$p$ gives rise to an algebraic
Galois extension $\pi_*(\Enr_n) \lra \pi_*(B)$, where the target is
concentrated in even degrees.

Rognes~\cite[definition 10.3.1]{R} calls a connected commutative
$S$-algebra $A$ \emph{separably closed} if there are no $G$-Galois
extensions $A \lra B$ with $G$ finite and non-trivial and $B$
connected, \ie, if each finite $G$-Galois extension $A \lra B$ has
a trivial Galois group or $B$ not connected.

In this terminology we prove that for an odd prime~$p$ the spectrum
$\Enr_n$ is separably closed. We conjecture that it is also separably
closed when $p=2$.

In section \ref{sec:knlocal} we will show that our results hold
$K(n)$-locally, \ie, that there are no non-trivial connected $K(n)$-local
Galois extensions with finite Galois group at odd primes and with
finite Galois group with cyclic quotient for the even prime.

So far, not many examples of separably closed commutative $S$-algebras
are known. In~\cite[theorem~10.3.3]{R}, Rognes proves that the
(unlocalized) sphere spectrum is separably closed. His proof uses
the fact that the ring of integers is separably closed,
see~\cite[proposition~10.3.2]{R}. We show that $\Enr_n$ is a separable
closure of the sphere in the $K(n)$-local category for all $n$ and
all odd primes~$p$. Hovey and Strickland showed that the $K(n)$-local
category is irreducible, \ie, it has no non-trivial localising (or
colocalising) subcategories~\cite[section~7]{HS}.

In~\cite{BR:Gal} we used the convention that for a Galois extension
of commutative $S$-algebras $A \lra B$ it is assumed that $B$ is
faithful as an $A$-module in the sense of~\cite[definition 4.3.1]{R}.
For the investigation of possible Galois extensions of $\Enr_n$ we
do not need this assumption because we can exploit the fact that
$\Enr_n$ has a residue field that is a finite cell $\Enr_n$-module
spectrum. We are grateful to John Rognes who suggested that line of
argument. Therefore a $G$-Galois extension of commutative $S$-algebras
$A \lra B$ is understood to consist of the following data
(compare~\cite[definition~4.1.3]{R} and~\cite[definition~1.4.4]{BR:Gal}).

Let $A$ be a commutative $S$-algebra and let $B$ be a commutative
cofibrant $A$-algebra. Let $G$ be a finite (discrete) group and
suppose that there is an action of $G$ on $B$ by commutative
$A$-algebra morphisms. Then $B/A$ is a $G$-Galois extension if
it satisfies the following two conditions:
\begin{itemize}
\item
The natural map $i\: A \lra B^{\h G} = F(EG_+,B)^G$ is a weak
equivalence of $A$-algebras.
\item
The canonical map of $B$-algebras
\[
h \: B \wedge_A B \lra F(G_+,B)
\]
that is induced from the action of $G$ on the right hand factor
of~$B$ is an equivalence.
\end{itemize}
If $i$ and $h$ are $X$-equivalences for some spectrum $X$, then
$B/A$ is called an $X$-local $G$-Galois extension.

\section{The spectrum $\Enr_n$}\label{sec:Ennr}

For ease of reference, we provide some details on the spectrum
$\Enr_n$, expanding on the discussion of~\cite[section 5.4.1]{R}.

For each $k\geq1$, using the methods of~\cite{SVW,BR:Gal}, it
follows that there is a Galois extension $W\F_{p^{nk}}E_n/E_n$
with Galois group $\Gal(\F_{p^{nk}}/\F_{p^n})\iso C_{k}$ and
\[
\pi_*(W\F_{p^{nk}}E_n) = W\F_{p^{nk}}\otimes_{W\F_{p^{n}}}\pi_*(E_n).
\]
Whenever $k \mid m$, there is an $E_n$-algebra morphism
$W\F_{p^{nk}}E_n \lra W\F_{p^{nm}}E_n$ which on homotopy groups
induces the obvious homomorphism obtained from the natural
inclusions $W\F_{p^{nk}} \lra W\F_{p^{nm}}$. Taking the colimit
in the category of commutative $E_n$-algebras leads to a spectrum
which is not $K(n)$-local, although each of the $W\F_{p^{nk}}E_n$
is $K(n)$-local since it is a finite wedge of copies of $E_n$.
The homotopy ring
\[
\pi_*(\colim_k W\F_{p^{nk}}E_n)
       \cong \colim_k W\F_{p^{nk}}[[u_1,\ldots,u_{n-1}]][u^{\pm 1}]
\]
is Noetherian, regular and local. Let $K_n$ denote the $E_n$-module
spectrum $E_n/\mathfrak{m}$ with $\mathfrak{m}$ denoting the maximal
ideal $(p,u_1,\ldots,u_{n-1})$ in $\pi_0(E_n)$. The finiteness of
$K'=\colim_k W\F_{p^{nk}}K_n$ over $E'=\colim_kW\F_{p^{nk}}E_n$
ensures that we can apply~\cite[theorem 6.4]{BL} and obtain that
\begin{equation*}
L^{E'}_{K'}E' = \holim_k E'/\mathfrak{m}^k
\end{equation*}
and
\begin{equation*}
\pi_*L^{E'}_{K'}E' = \lim_k (\pi_*E')/\mathfrak{m}^k
= W\Fc_p[[u_1,\ldots,u_{n-1}]][u^{\pm 1}].
\end{equation*}
Finally, a $\Gamma$-cohomology obstruction theory argument similar
to~\cite{RR,BR:Gamma} applies to show that $L^{E'}_{K'}E' = \Enr_n$
does indeed have a unique commutative $E_n$-algebra structure.

An alternative way to construct $\Enr_n$ is by considering the Honda
formal group law over $\Fc_p$ and its deformation theory with
respect to the complete local ring $W\Fc_p[[u_1,\ldots,u_{n-1}]]$.
Then Goerss-Hopkins-Miller obstruction theory~\cite[\S 7]{GH} shows
that $\Enr_n$ has a unique $E_\infty$-structure realizing
$(\Enr_n)_*(\Enr_n)$ as a commutative $\pi_*(\Enr_n)$-algebra and
also that $\Enr_n$ has an action of the group
$\mathbb{S}_n \rtimes \hat{\mathbb{Z}}$ via maps of $E_\infty$ ring
spectra. Rognes~\cite[5.4.6]{R} shows that
\begin{equation*}
L_{K(n)}S \lra \Enr_n
\end{equation*}
is a $K(n)$-local profinite Galois extension.

\section{Some results on Galois theory}\label{sec:Galoisthy}

We recall some facts about algebraic and topological Galois theory.
We begin with some algebraic results about Galois extensions of
graded commutative rings. In the following, $G$ will always be
a finite group.

Let $R\lra S$ be a $G$-Galois extension of graded commutative
rings. We include the following discussion along the lines
of~\cite[theorem~1.3]{CHR} at the suggestion of the referee,
because we do not know of any convenient source that states
that in the context of graded Galois extensions $S$ is finitely
generated projective over $R$. The impatient reader is invited
to move on directly to Proposition~\ref{prop:Galois-basechange}.

The unramified condition gives an isomorphism of
$S\otimes_R S$-modules
\[
h\:S\otimes_R S \xrightarrow{\iso} \prod_G S;
\quad
x\otimes y \mapsto (x g(y))_{g\in G},
\]
where the bimodule structure on the right hand side
is given by
\[
a(t_g)b = (at_g g(b))_{g\in G}.
\]
Then there is a map
\[
\sigma\: S \lra S\otimes_R S;
\quad
x\mapsto (x \otimes 1)h^{-1}(\delta_1),
\]
where for each $g\in G$, $\delta_g=(\delta_{g,h})_{h\in G}$
is the element of $\prod_G S$ which has zeros everywhere
except for a one in the entry corresponding to $g$. It is
easy to see that $\sigma$ is a bimodule map and when composed
with the product $\mu\:S\otimes_R S\lra S$ we obtain
$\mu\sigma = \id_S$. So $S$ is separable. In particular
there is an idempotent
\[
e = h^{-1}(\delta_1) \in S\otimes_R S,
\]
and we can write
\[
e = \sum_i u_i\otimes v_i
\]
for some finite collection of elements $u_i,v_i\in S$.
Notice that
\[
\delta_1 = h(e) = \biggl(\sum_iu_i g(v_i)\biggr)_{g\in G},
\]
so for each $g\in G$ we have
\[
\sum_iu_i g(v_i) =
\begin{cases}
1& \text{if $g=1$}, \\
0& \text{otherwise}.
\end{cases}
\]
Define the following $R$-linear maps:
\[
\alpha_i\:S\lra R;
\quad
x \mapsto \sum_{g\in G} g(v_i) g(x).
\]
Calculating in $S$ we have
\[
\sum_i u_i\alpha_i(x)
    = \sum_i u_i\sum_{g\in G} g(v_i) g(x)
    = \sum_{g\in G}\biggl(\sum_i u_i g(v_i)\biggr)g(x)
    = x.
\]
Now we use a well known characterisation of finitely
generated projective modules that applies as well in
the graded case.
\begin{lem}\label{lem:projchar}
Let $R$ be a graded commutative ring and let $M$ be a
graded $R$-module. Then $M$ is a finitely generated
projective module if and only if for some~$n$ there
are elements $b_1,\ldots,b_n\in M$ and $R$-linear maps
$\alpha_1,\ldots,\alpha_n\:M\lra R$ such that for
every $x\in M$,
\[
x = \sum_i\alpha_i(x)b_i.
\]
\end{lem}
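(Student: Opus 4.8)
The plan is to prove this standard characterization of finitely generated projective modules, adapted to the graded setting. This is the ``dual basis lemma'': the elements $b_1,\ldots,b_n$ together with the functionals $\alpha_1,\ldots,\alpha_n$ form a \emph{dual basis}, and their existence is equivalent to $M$ being a direct summand of a free module of finite rank. I would prove both implications.

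For the implication that the dual basis condition yields finite generation and projectivity, I would first define an $R$-linear map $\phi\:R^n\lra M$ sending the $i$-th standard generator $e_i$ to $b_i$, and an $R$-linear map $\psi\:M\lra R^n$ by $\psi(x)=(\alpha_1(x),\ldots,\alpha_n(x))$. In the graded setting one must be slightly careful: to keep these maps degree-preserving, one takes a free module $\bigoplus_i \Sigma^{d_i}R$ with the $i$-th generator placed in the degree $d_i=\deg b_i$, and interprets each $\alpha_i$ as having degree $-d_i$. The hypothesis $x=\sum_i\alpha_i(x)b_i$ says exactly that $\phi\o\psi=\id_M$. Hence $\psi$ is a split monomorphism and $\phi$ a split epimorphism, exhibiting $M$ as a retract of a finite free graded module. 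Since a retract of a projective module is projective, and $M$ is generated by $b_1,\ldots,b_n=\phi(e_1),\ldots,\phi(e_n)$, this shows $M$ is finitely generated projective.

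For the converse, suppose $M$ is finitely generated projective. Finite generation gives a surjection $\pi\:F\lra M$ from a finite free graded module $F$, which may be taken degree-preserving; projectivity of $M$ then splits $\pi$ via a degree-preserving section $s\:M\lra F$ with $\pi\o s=\id_M$. Writing $F=\bigoplus_i \Sigma^{d_i}R$ with generators $f_i$, set $b_i=\pi(f_i)$, and let $\alpha_i\:M\lra R$ be the composite of $s$ with the $i$-th coordinate projection $F\lra \Sigma^{d_i}R\cong R$. Then for each $x\in M$ we have $s(x)=\sum_i\alpha_i(x)f_i$, and applying $\pi$ gives $x=\pi(s(x))=\sum_i\alpha_i(x)b_i$, as required.

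I do not expect a serious obstacle here; the argument is essentially formal. The only point requiring care is the grading bookkeeping, namely ensuring that all maps are homogeneous of the correct degree so that the free module and its section live in the graded category rather than merely the underlying ungraded one. Since $R$ is graded commutative and everything in sight respects the grading by construction, this bookkeeping goes through without difficulty, and the classical proof transfers verbatim once the degree shifts $\Sigma^{d_i}$ are inserted.
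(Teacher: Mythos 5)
Your proof is correct: it is the standard dual basis argument, with the degree shifts $\Sigma^{d_i}$ inserted so that everything stays in the graded category. The paper itself gives no proof of this lemma, citing it only as a ``well known characterisation'' that carries over to the graded case, and the argument the authors have in mind is exactly the one you wrote out.
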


Thus we obtain the following result.

\begin{lem} \label{lem:projective}
A $G$-Galois extension $R\lra S$ is a finitely generated
projective $R$-module.
\end{lem}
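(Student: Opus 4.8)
The plan is to deduce the statement directly from the dual-basis characterisation in Lemma~\ref{lem:projchar}, since the discussion preceding the lemma has already produced a candidate dual basis. Indeed, the separability idempotent $e=\sum_i u_i\otimes v_i\in S\otimes_R S$ furnishes the finite family of elements $u_i\in S$, and the formula $\alpha_i(x)=\sum_{g\in G}g(v_i)g(x)$ furnishes the accompanying functionals $\alpha_i\:S\lra R$. Taking $b_i=u_i$, the identity $\sum_i u_i\alpha_i(x)=x$ established above is exactly the criterion of Lemma~\ref{lem:projchar}, the ordering of the two factors being immaterial in the graded commutative setting. Hence $S$ is a finitely generated projective $R$-module.

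Before invoking the lemma I would verify the two properties of the $\alpha_i$ asserted in their definition. For $R$-linearity, note that every $g\in G$ fixes $r\in R$, so $\alpha_i(rx)=\sum_{g}g(v_i)g(r)g(x)=r\sum_{g}g(v_i)g(x)=r\alpha_i(x)$. That $\alpha_i$ takes values in $R$ rather than merely in $S$ follows from $G$-invariance: for any $h\in G$ we have $h(\alpha_i(x))=\sum_{g}hg(v_i)\,hg(x)=\alpha_i(x)$, since left translation by $h$ only reindexes the sum over $G$; thus $\alpha_i(x)\in S^G$, and for a $G$-Galois extension of graded commutative rings this fixed subring is identified with $R$, as in~\cite[theorem~1.3]{CHR}.

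The real content of the argument lies not in this lemma but in the unramified isomorphism $h\:S\otimes_R S\xrightarrow{\iso}\prod_G S$, which produces the idempotent $e$ and thereby the generators $u_i$ together with the relations $\sum_i u_i g(v_i)=\delta_{g,1}$ on which everything rests; that input is already in hand. Consequently the only step requiring any care is the identification $S^G=R$ used above, and once it is granted the conclusion is a direct application of Lemma~\ref{lem:projchar} to the data $u_i$ and $\alpha_i$.
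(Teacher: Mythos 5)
Your proposal is correct and follows essentially the same route as the paper: the preceding discussion already constructs the separability idempotent $e=\sum_i u_i\otimes v_i$, the functionals $\alpha_i$, and the identity $\sum_i u_i\alpha_i(x)=x$, and the paper likewise concludes by invoking the dual-basis criterion of Lemma~\ref{lem:projchar}. Your additional check that each $\alpha_i$ is $G$-invariant and hence lands in $S^G=R$ is a worthwhile detail that the paper leaves implicit, but it does not change the argument.
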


\begin{prop}\label{prop:Galois-basechange}
Let $S/R$ be a $G$-Galois extension of graded commutative
rings. Then for any graded commutative $R$-algebra $T$,
$T\otimes_R S/T$ is also a $G$-Galois extension. In
particular, if $I\ideal R$ is an ideal, $(S/SI)/(R/I)$
is a $G$-Galois extension.
\end{prop}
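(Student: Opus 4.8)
The plan is to verify the two defining conditions of a $G$-Galois extension of graded commutative rings directly for $T\otimes_R S$ over $T$, using the fact that base change along $R\lra T$ commutes with the tensor products and products appearing in those conditions. First I would fix the $G$-action on $T\otimes_R S$ to be $\id_T\otimes g$ on the second factor, which is well defined since the $G$-action on $S$ fixes $R$ and hence is compatible with the $R$-algebra structure of $T$. The unramified condition for $S/R$ is encoded by the isomorphism $h\:S\otimes_R S\xrightarrow{\iso}\prod_G S$ recalled above. Applying $T\otimes_R(-)$ and using that $T\otimes_R$ preserves the tensor product (associativity of $\otimes$) and commutes with the finite product $\prod_G$, I obtain an isomorphism
\[
(T\otimes_R S)\otimes_T(T\otimes_R S)\iso T\otimes_R(S\otimes_R S)\xrightarrow{\id_T\otimes h}T\otimes_R\prod_G S\iso\prod_G(T\otimes_R S),
\]
and one checks that under these identifications the composite is exactly the canonical map $h_T$ for the base-changed extension, sending $(t\otimes x)\otimes(t'\otimes y)\mapsto (tt'\otimes xg(y))_{g\in G}$. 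This establishes the unramified condition.

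Next I would treat the fixed-point (or ``separable/invariance'') condition that $T$ maps isomorphically onto the $G$-invariants of $T\otimes_R S$. Here the key input is Lemma~\ref{lem:projective}: since $S$ is a finitely generated projective, hence flat, $R$-module, the base change $T\otimes_R(-)$ is exact, so it commutes with the formation of $G$-invariants (which can be expressed as the kernel of the map $S\lra\prod_{g\in G}S$, $x\mapsto(g(x)-x)_g$, a finite limit preserved by flat base change). Since $S^G=R$ for the Galois extension $S/R$, flatness of $S$ gives $(T\otimes_R S)^G\iso T\otimes_R S^G = T\otimes_R R = T$, and one checks this isomorphism is the structure map. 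Equivalently, and perhaps cleaner for the graded setting, I would transport the explicit separability data: the idempotent $e=\sum_i u_i\otimes v_i\in S\otimes_R S$ base-changes to $e_T=\sum_i(1\otimes u_i)\otimes(1\otimes v_i)$, and the trace-like maps $\alpha_i$ extend to $\id_T\otimes\alpha_i$, so that the projectivity/separability witnesses of Lemma~\ref{lem:projchar} carry over verbatim, giving the invariance condition after base change.

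The main obstacle I anticipate is purely bookkeeping: making precise the canonical isomorphisms $T\otimes_R(S\otimes_R S)\iso(T\otimes_R S)\otimes_T(T\otimes_R S)$ and $T\otimes_R\prod_G S\iso\prod_G(T\otimes_R S)$ in the \emph{graded} category and confirming that the $G$-action and the map $h$ are compatible with them on the nose, rather than merely up to a sign or a reindexing. The second isomorphism requires that $\prod_G$ is a finite product, so it agrees with the finite coproduct and thus commutes with the tensor functor $T\otimes_R(-)$; this is why finiteness of $G$ is essential and where flatness (from Lemma~\ref{lem:projective}) does the real work. Once these identifications are pinned down, both conditions follow formally. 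For the final assertion, I would simply specialise $T=R/I$: then $T\otimes_R S=S/SI$ and $T=R/I$, and the general statement yields that $(S/SI)/(R/I)$ is a $G$-Galois extension, completing the proof.
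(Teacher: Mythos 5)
Your handling of the unramified condition is correct and is essentially the content of \cite[lemma~1.7]{CHR}, which is all the paper itself invokes (its published proof is a one-line citation plus the remark that Lemma~\ref{lem:projective} makes the ungraded argument carry over). However, your argument for the invariance condition $(T\otimes_R S)^G=T$ contains a genuine error: you assert that $T\otimes_R(-)$ is exact ``since $S$ is a finitely generated projective, hence flat, $R$-module.'' Exactness of the functor $T\otimes_R(-)$ is equivalent to flatness of $T$ over $R$, not of $S$, and $T$ is an arbitrary graded commutative $R$-algebra; in the case the proposition is actually needed for, $T=R/I$, it is typically not flat. So you cannot commute $T\otimes_R(-)$ past the kernel presenting $S^G$, and $(T\otimes_R S)^G\iso T\otimes_R S^G$ does not follow from what you have written. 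Your fallback --- transporting the separability idempotent $e$ and the maps $\alpha_i$ --- shows that $T\otimes_R S$ is finitely generated projective and separable over $T$, but by itself it does not show that every $G$-invariant element of $T\otimes_R S$ lies in the image of $T$.

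The missing ingredient is the trace. For a $G$-Galois extension the trace $\mathrm{tr}=\sum_{g\in G}g\colon S\to S^G=R$ is surjective (this follows from the dual-basis identity $x=\sum_i u_i\alpha_i(x)$ together with Nakayama, or see \cite{CHR}), so there is a degree-zero element $c\in S$ with $\mathrm{tr}(c)=1$. For any $w\in T\otimes_R S$ the element $\sum_{g}(\id_T\otimes g)(w)$ lies in the image of $T\lra T\otimes_R S$, since on elementary tensors it equals $t\,\mathrm{tr}(s)\otimes 1$; and if $z$ is $G$-invariant then
\[
\sum_{g\in G}(\id_T\otimes g)\bigl(z\cdot(1\otimes c)\bigr)
   = z\cdot\bigl(1\otimes\mathrm{tr}(c)\bigr) = z .
\]
Together with the $R$-linear splitting $s\mapsto\mathrm{tr}(cs)$ of $R\lra S$, which base-changes to a splitting of $T\lra T\otimes_R S$ and hence shows that map is injective, this yields $(T\otimes_R S)^G=T$. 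That some such argument is unavoidable can be seen from $\Z\subset\Z[i]$ with complex conjugation: here $\Z[i]^{C_2}=\Z$, yet after base change to $\F_2$ one gets $(\F_2\otimes_{\Z}\Z[i])^{C_2}=\F_2[i]\neq\F_2$. Of course that extension is not Galois, which is exactly the point --- preservation of the invariance condition under base change is where the Galois hypothesis, through the trace element, does real work.
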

\begin{proof}
A proof in the ungraded case can be found for instance
in~\cite[lemma~1.7]{CHR} and with the help of
Lemma~\ref{lem:projective} it carries over to the graded
case.
\end{proof}

In the following we need to understand base-change properties
of topological Galois extensions.
\begin{prop}\label{prop:TopGalois-basechange}
Let $B/A$ be a $G$-Galois extension of commutative $S$-algebras.
Suppose that $A \lra C$ is a map of commutative $S$-algebras and
assume that $C$ is weakly equivalent to a retract of a finite cell
$A$-module spectrum. Then $C\wedge_A B/C$ is also a $G$-Galois
extension.
\end{prop}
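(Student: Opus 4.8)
The plan is to verify the two defining conditions of a $G$-Galois extension directly for $C\wedge_A B/C$, exploiting the fact that base change along $A\lra C$ commutes with the smash product and, crucially, with the function spectra appearing in those conditions, provided $C$ is a retract of a finite cell $A$-module. The key structural observation I would use is that for a finite cell $A$-module $C$ (or a retract thereof), the natural map
\[
C\wedge_A F(Y,B)\lra F(Y,C\wedge_A B)
\]
is a weak equivalence for finite $G$-CW spectra $Y$, and indeed for the spectra $EG_+$ and $G_+$ we care about. I would first record this as the main technical input: smashing with a dualizable (retract-of-finite-cell) $A$-module commutes with forming function spectra out of a finite complex.

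\emph{First}, I would check the unramified condition, which is the easier of the two. One has the canonical map
\[
(C\wedge_A B)\wedge_C(C\wedge_A B)\simeq C\wedge_A(B\wedge_A B)
\xrightarrow{\;C\wedge_A h\;} C\wedge_A F(G_+,B).
\]
Since $h\:B\wedge_A B\lra F(G_+,B)$ is an equivalence by hypothesis, so is $C\wedge_A h$. It then remains to identify $C\wedge_A F(G_+,B)$ with $F(G_+,C\wedge_A B)$, which is exactly the commutation statement above applied to the finite complex $Y=G_+$. Checking that this composite agrees with the canonical $h$-map for the base-changed extension is then a matter of tracing through the $G$-action, which I would treat as routine naturality.

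\emph{Second}, I would verify the fixed-point condition $i\:C\lra (C\wedge_A B)^{\h G}=F(EG_+,C\wedge_A B)^G$. Starting from the given weak equivalence $A\simeq B^{\h G}=F(EG_+,B)^G$, I would smash with $C$ over $A$ to get $C\simeq C\wedge_A F(EG_+,B)^G$, and then I need to commute $C\wedge_A(-)$ past both the function spectrum $F(EG_+,-)$ and the $G$-fixed points $(-)^G$. The commutation with $F(EG_+,-)$ again comes from the dualizability input, now applied with $Y=EG_+$; since $EG$ is built from free $G$-cells this is where finiteness of $C$ is genuinely needed, as one reduces to the finite skeleta and passes to the (homotopy) limit. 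Commuting past $(-)^G$ should follow because for a finite-cell $A$-module the relevant assembly maps are equivalences on each free orbit $G_+\wedge(\text{cell})$.

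\emph{The main obstacle} I anticipate is precisely this commutation of $C\wedge_A(-)$ with $F(EG_+,-)^G$, since $EG_+$ is an \emph{infinite} complex and the naive dualizability argument only handles finite skeleta. The honest statement one needs is that $C\wedge_A(-)$ commutes with the homotopy limit defining $B^{\h G}$; this is where the retract-of-finite-cell hypothesis does real work, guaranteeing that $C\wedge_A(-)$ preserves the relevant homotopy limits by making $C$ a small (dualizable) object. I would isolate this as the key lemma, prove the finite case by a standard induction on cells using the equivalence for a single free orbit, and then argue the passage to $EG_+$ by compatibility with the homotopy limit over skeleta, the retract case following formally since weak equivalences are closed under retracts.
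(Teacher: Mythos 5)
Your argument is correct, but note that the paper itself gives no proof here: it simply cites \cite[lemma~7.1.3]{R}, and what you have written is essentially a reconstruction of the proof of that lemma. Your decomposition into the two Galois conditions is the right one; the unramified condition is indeed the easy half (since $F(G_+,B)\simeq\prod_G B$ is a \emph{finite} product, base change commutes with it without any dualizability input), and the genuine content is exactly where you locate it, namely that for $C$ a retract of a finite cell $A$-module the natural map $C\wedge_A B^{\h G}\lra (C\wedge_A B)^{\h G}$ is an equivalence, proved by induction over the skeleta of $EG$ and passage to the homotopy limit using that $C\wedge_A(-)\simeq F_A(F_A(C,A),-)$ for dualizable $C$. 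This last point is precisely \cite[lemma~6.2.6]{R}, which the paper invokes explicitly in the proof of Proposition~\ref{prop:TopGalois-basechange-field}; so your "key lemma" is the same one the paper relies on elsewhere. The only step I would ask you to make explicit is that smashing with a retract of a finite cell $A$-module preserves weak equivalences (needed to conclude that $C\wedge_A h$ is still an equivalence), which holds by the cell-module theory of \cite{EKMM}; with that remark your outline is a complete and correct proof.
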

\begin{proof}
See~\cite[lemma~7.1.3]{R}.
\end{proof}

We need to understand base changes as above along $C$, where $C$
is a residue field in the sense of~\cite{BR:Invt} and which also
happens to be an $A$-algebra. For instance, we could take
$A=\Enr_n$, the $n$-th Lubin-Tate spectrum, and $C=\Knr_n$, the
associated Morava $K$-theory with one of its strict multiplicative
structures described in~\cite{A}. However, in these cases $C$ is
not a commutative $S$-algebra, but for our purposes it suffices
that its coefficient ring is a graded commutative ring.

\begin{cor}\label{cor:TopGalois-basechange-noncomm-htpy}
Let $B$ be a cofibrant commutative $A$-algebra. Assume
$\pi_*(B)/\pi_*(A)$ is a $G$-Galois extension and $C$ is an
associative $A$-algebra whose coefficient ring $\pi_*(C)$ is
a graded commutative $\pi_*(A)$-algebra. Then
$\pi_*(C\wedge_A B)/\pi_*(C)$ is also a $G$-Galois extension.
\end{cor}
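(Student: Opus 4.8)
The plan is to reduce this topological statement to the purely algebraic base-change result Proposition~\ref{prop:Galois-basechange}. Setting $R=\pi_*(A)$, $S=\pi_*(B)$ and $T=\pi_*(C)$, that proposition applies once we know that the natural $\pi_*(C)$-algebra map
\[
\pi_*(C)\otimes_{\pi_*(A)}\pi_*(B)\lra\pi_*(C\wedge_A B)
\]
is an isomorphism. Since $T=\pi_*(C)$ is a graded commutative $\pi_*(A)$-algebra by hypothesis, Proposition~\ref{prop:Galois-basechange} then shows that $\pi_*(C)\otimes_{\pi_*(A)}\pi_*(B)/\pi_*(C)$ is a $G$-Galois extension, and transporting the Galois data along this isomorphism yields the claim. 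So the entire homotopy-theoretic content of the corollary lies in establishing the displayed isomorphism.

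The crucial input is that $\pi_*(B)$ is a finitely generated projective $\pi_*(A)$-module, which is exactly Lemma~\ref{lem:projective} applied to the algebraic $G$-Galois extension $\pi_*(B)/\pi_*(A)$. I would then invoke the K\"unneth spectral sequence for $A$-module spectra,
\[
E^2_{s,t}=\Tor^{\pi_*(A)}_{s,t}(\pi_*(C),\pi_*(B))\Longrightarrow\pi_{s+t}(C\wedge_A B).
\]
Projectivity of $\pi_*(B)$ forces $\Tor^{\pi_*(A)}_s(\pi_*(C),\pi_*(B))=0$ for $s>0$, so the spectral sequence collapses onto the filtration-zero line $E^2_{0,*}=\pi_*(C)\otimes_{\pi_*(A)}\pi_*(B)$, and the edge homomorphism is precisely the natural map above.

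Two points require care, and together they constitute the main obstacle. First, since $A$ is commutative and both $B$ and $C$ are $A$-algebras, the K\"unneth spectral sequence is multiplicative; as it is concentrated in filtration zero, the collapse identifies $\pi_*(C\wedge_A B)$ with $\pi_*(C)\otimes_{\pi_*(A)}\pi_*(B)$ as graded rings, not merely as modules. Second, the $G$-action on $B$ by $A$-algebra maps induces an action on $\pi_*(B)$ over $\pi_*(A)$ realizing the algebraic $G$-Galois structure, while $G$ acts on $C\wedge_A B$ through the right-hand factor; by naturality of the K\"unneth spectral sequence the collapse isomorphism is $G$-equivariant. The delicate step is therefore verifying that the ring and $G$-module structure on $\pi_*(C\wedge_A B)$ arising from topology agrees, under the edge isomorphism, with the tensor-product structure to which Proposition~\ref{prop:Galois-basechange} is applied; once this compatibility is in hand, the conclusion is immediate.
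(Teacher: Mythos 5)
Your proposal is correct and follows essentially the same route as the paper: projectivity of $\pi_*(B)$ over $\pi_*(A)$ (via Lemma~\ref{lem:projective}) collapses the K\"unneth spectral sequence of~\cite{EKMM} to the edge isomorphism $\pi_*(C\wedge_A B)\iso\pi_*(C)\otimes_{\pi_*(A)}\pi_*(B)$, after which Proposition~\ref{prop:Galois-basechange} applies. The paper leaves the multiplicative and $G$-equivariant compatibility of this identification implicit, so your extra care there is a welcome elaboration rather than a divergence.
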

\begin{proof}
The assumption on the homotopy rings implies that $\pi_*(B)$
is a finitely generated projective $\pi_*(A)$-module. Therefore
the relevant K\"unneth spectral sequence of~\cite{EKMM} collapses
to give an isomorphism
\[
\pi_*(C\wedge_A B) \iso \pi_*(C) \otimes_{\pi_*(A)} \pi_*(B).
\]
Now the result follows from Proposition~\ref{prop:Galois-basechange}.
\end{proof}

Note that the realizability results of~\cite{BR:Gal} imply that
in the situation above the algebraic $G$-Galois extension
$\pi_*(B)/\pi_*(A)$ can be realized by a $G$-Galois extension
of commutative $S$-algebras $A \lra B'$ with $B' \simeq B$.

\begin{prop}\label{prop:TopGalois-basechange-field}
Let $B/A$ be a $G$-Galois extension of commutative $S$-algebras.
Let $C$ be an associative $A$-algebra that is a retract of a
finite cell $A$-module spectrum and for which $\pi_*(C)$ is
a graded field. Then $\pi_*(C\wedge_A B)/\pi_*(C)$ is an
algebraic $G$-Galois extension.
\end{prop}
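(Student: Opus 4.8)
The plan is to verify, by hand, the two defining conditions for $\pi_*(C\wedge_A B)$ to be an algebraic $G$-Galois extension of $\pi_*(C)$, feeding the topological Galois data for $B/A$ through the base change along $A\lra C$ and exploiting at the crucial moment that $\pi_*(C)$ is a graded field. Write $D=C\wedge_A B$. As $B$ is a commutative $A$-algebra and $C$ an associative $A$-algebra, $D$ is an associative $C$-algebra, the action of $G$ on the right factor $B$ makes $D$ a $C$-algebra with $G$-action, and $\pi_*(D)$ becomes a $\pi_*(C)$-algebra with $G$-action. Set $k=\pi_*(C)$ and $S=\pi_*(D)$.

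First I would transfer the unramified condition. Applying $C\wedge_A(-)$ to the equivalence $h\:B\wedge_A B\xrightarrow{\ \simeq\ }F(G_+,B)$, using the base-change identification $C\wedge_A(B\wedge_A B)\simeq D\wedge_C D$ on the source and, since $G$ is finite, $C\wedge_A F(G_+,B)\simeq\prod_G D$ on the target, produces an equivalence
\[
D\wedge_C D\xrightarrow{\ \simeq\ }\prod_G D .
\]
Since $h$ is a map of commutative $A$-algebras, this is a map of associative $C$-algebras, hence a ring map on homotopy. Now the graded-field hypothesis is brought to bear: every graded $k$-module is free, so the K\"unneth spectral sequence $\Tor^{k}_{*,*}(S,S)\Rightarrow\pi_*(D\wedge_C D)$ collapses and identifies $\pi_*(D\wedge_C D)\cong S\otimes_k S$ as graded algebras. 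The composite is then a ring isomorphism
\[
S\otimes_k S\xrightarrow{\ \cong\ }\prod_G S,\qquad x\otimes y\mapsto\bigl(x\,g(y)\bigr)_{g\in G}.
\]
A pleasant by-product: projecting to the identity coordinate shows that the multiplication $S\otimes_k S\to S$ is itself a ring map, which forces $S$ to be graded commutative; thus the algebraic theory of Section~\ref{sec:Galoisthy} applies even though $C$ itself need not be commutative.

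It remains to identify $k$ with $S^G$, which I would do purely algebraically over the graded field $k$. A rank count in the isomorphism above gives $\rank_k S=|G|$. If $S^G$ properly contained $k$, choose a homogeneous $a\in S^G\setminus k$; then $a\otimes 1-1\otimes a$ is nonzero in $S\otimes_k S$ (as $1,a$ are $k$-linearly independent) yet maps to $0$, since $g(a)=a$ for all $g$, contradicting injectivity. Hence $S^G=k$, and together with the separability isomorphism this realizes $\pi_*(D)/\pi_*(C)$ as an algebraic $G$-Galois extension.

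The main obstacle, and the reason the hypotheses are arranged as they are, lies in the homotopy identification of the middle step rather than in the Galois bookkeeping. One cannot run the K\"unneth spectral sequence over $A$: when $C$ is a residue field, $\pi_*(C)$ is far from flat over $\pi_*(A)$, the sequence does not collapse, and $\pi_*(D)$ is genuinely not $\pi_*(C)\otimes_{\pi_*(A)}\pi_*(B)$. The computation must instead be organised relative to $C$, where being a graded field kills every higher $\Tor$-term in one stroke; this is exactly what makes the base-changed unramified equivalence visible on homotopy and, as noted, simultaneously forces the graded commutativity needed for the algebraic conclusion.
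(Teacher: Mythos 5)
Your transfer of the unramified condition and your algebraic deduction that $S^G$ is no bigger than $k$ are both sound, and the $a\otimes 1-1\otimes a$ argument is a clean alternative to the paper's dimension count on $S\otimes_k S^G$ inside $\bigl(\prod_G S\bigr)^G\iso S$. But there is a genuine gap: you never show that $C\wedge_A B$ is non-contractible, i.e.\ that the unit $k\to S$ is injective. Your argument only yields $S^G\subseteq\im(k\to S)$; to conclude $S^G\iso k$ (and indeed to have a Galois extension of rings at all) you need $S\neq0$. If $S=0$ the isomorphism $S\otimes_k S\iso\prod_G S$ holds vacuously, the rank equation $d^2=|G|d$ is satisfied with $d=0$ rather than $d=|G|$, and every subsequent step is consistent with the zero ring. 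Since the paper explicitly does \emph{not} assume that $B$ is faithful as an $A$-module, the possibility $C\wedge_A B\simeq *$ cannot be dismissed for free.

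This is precisely where the hypothesis that $C$ is a retract of a finite cell $A$-module spectrum enters --- a hypothesis your proof never invokes, which is the tell-tale sign that something is missing. The paper uses it, via \cite[lemma~6.2.6]{R}, to commute $C\wedge_A(-)$ past homotopy fixed points: $(C\wedge_A B)^{\h G}\simeq C\wedge_A B^{\h G}\simeq C\wedge_A A\simeq C\not\simeq *$, hence $C\wedge_A B\not\simeq *$. (Relatedly, your rank count tacitly uses that $\dim_k S$ is finite, so that $d^2=|G|d$ can be cancelled; this comes from dualizability of $B$ over $A$, which makes $C\wedge_A B$ a retract of a finite cell $C$-module, and is worth a sentence.) Adding the non-contractibility step --- which genuinely requires the finite-cell hypothesis on $C$ --- completes your proof; everything else you wrote then goes through.
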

\begin{proof}
The assumption that $C$ is a retract of a finite cell
$A$-module spectrum guarantees that the homotopy fixed
points $(C \wedge_A B)^{\h G}$ are weakly equivalent
to $C \wedge_A B^{\h G} \simeq C \wedge_A A \simeq C$
by~\cite[lemma~6.2.6]{R}. In particular, this shows that
$C \wedge_A B$ is not contractible.

The unramified condition follows from the evident chain of
isomorphisms
\begin{multline*}
\pi_*(C\wedge_A B)\otimes_{\pi_*(C)}\pi_*(C\wedge_A B)
             \xrightarrow{\iso}\pi_*(C\wedge_A B\wedge_A B) \\
             \xrightarrow{\iso}\pi_*(\prod_G C\wedge_A B)
             \xrightarrow{\iso}\prod_G\pi_*(C\wedge_A B).
\end{multline*}
We know that $\pi_*(C\wedge_A B)$ is a finite dimensional
$\pi_*(C)$-vector space. Let $k$ be the dimension of
$\pi_*(C\wedge_A B)$ over $\pi_*(C)$. The $G$-equivariant
isomorphism
\[
\pi_*(C\wedge_A B)\otimes_{\pi_*(C)}\pi_*(C\wedge_A B)
              \xrightarrow{\iso}\prod_G\pi_*(C\wedge_A B)
\]
maps $\pi_*(C\wedge_A B)\otimes_{\pi_*(C)}\pi_*(C\wedge_A B)^G$
isomorphically onto a subspace of
\[
\bigl(\prod_G\pi_*(C\wedge_A B)\bigr)^G \iso \pi_*(C\wedge_A B).
\]
Note that this shows that $k^2 = k|G|$ and thus $k$ is the cardinality
of the group. Now we know that
\[
\dim_{\pi_*(C)} \pi_*(C\wedge_A B)\otimes_{\pi_*(C)}\pi_*(C\wedge_A B)^G
               = k \dim_{\pi_*(C)} \pi_*(C\wedge_A B)^G \leq k
\]
and therefore $\pi_*(C\wedge_A B)^G$ is $1$-dimensional
over $\pi_*(C)$.
\end{proof}

In our work we will need a basic lemma on idempotents on Galois
extensions. For background on idempotents on commutative
$S$-algebras, see~\cite[10.2]{R}. We just recall some of the
main results.

Let $R$ be a commutative $S$-algebra; then we say that a commutative
$R$-algebra $A$ \emph{splits} if there is a weak equivalence of
commutative $R$-algebras $A \simeq A_1 \times A_2$ for some
commutative $R$-algebras $A_1,A_2$ which satisfy
$A_1 \not\simeq * \not\simeq A_2$, \ie, they are homotopically
non-trivial as commutative $R$-algebras. If $A$ admits no such
splitting it is said to be \emph{connected}, otherwise it is
\emph{non-connected}.

Let $\mathcal{E}(A)$ denote the mapping space of non-unital
commutative $A$-algebra endomorphisms of~$A$. Rognes shows
in~\cite[lemma~10.2.3]{R} that $A$ is connected if and only
if the map of spaces $\{0,1\} \lra \mathcal{E}(A)$ that takes
$0$ to the constant map and $1$ to the identity map is a weak
equivalence. Furthermore he proves in~\cite[proposition~10.2.2]{R}
that $\pi_0\mathcal{E}(A)$ corresponds to the idempotents of
the ring $\pi_0(A)$, thus $A$ is connected if and only if
$\pi_0(A)$ is connected in the sense of algebra.

\begin{lem}\label{lem:split}
Let $\iota\: A \lra B$ be a $G$-Galois extension of commutative
$S$-algebras. If $A$ splits as $A \simeq A_1 \times A_2$, then
as a commutative $A$-algebra, $B$ splits as
$B \simeq B_1 \times B_2$, such that there are compatible maps
of commutative $S$-algebras $A_1 \lra B_1$ and $A_2 \lra B_2$
which are $G$-Galois extensions.
\end{lem}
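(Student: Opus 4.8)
The plan is to realize the two factors of $B$ as base changes of $B$ along the projections $A \lra A_i$ and then to invoke Proposition~\ref{prop:TopGalois-basechange}. First I would translate the splitting into idempotents: by Rognes's correspondence~\cite[proposition~10.2.2]{R}, the decomposition $A \simeq A_1 \times A_2$ is encoded by a pair of orthogonal idempotents $e_1,e_2 \in \pi_0(A)$ with $e_1+e_2=1$, where $A_i \simeq e_iA$ as $A$-modules and the projection $p_i\: A \lra A_i$, $a\mapsto e_ia$, is a map of commutative $S$-algebras. In particular each $A_i$ is a retract of the single-cell $A$-module $A$, so the hypotheses of Proposition~\ref{prop:TopGalois-basechange} are met with $C=A_i$.

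Next I set $B_i := A_i \wedge_A B$, the base change of $B$ along $p_i$, equipped with the $G$-action induced from the action on the right-hand factor $B$. Proposition~\ref{prop:TopGalois-basechange} then immediately gives that $B_i/A_i$ is a $G$-Galois extension, and the unit maps $A_i = A_i \wedge_A A \lra A_i \wedge_A B = B_i$ are compatible with $\iota\: A \lra B$ and the projections by construction. In particular $B_i \not\simeq *$, since $A_i \simeq B_i^{\h G}$ is homotopically non-trivial.

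It then remains to check that $B \simeq B_1 \times B_2$ as a commutative $A$-algebra, with these $B_i$ as its factors. For this I would pass again to idempotents: the images $\bar e_i = \iota_*(e_i) \in \pi_0(B)$ are orthogonal idempotents summing to $1$, and, because $G$ acts by $A$-algebra maps, they are fixed by $G$. By~\cite[proposition~10.2.2, lemma~10.2.3]{R} they determine a $G$-equivariant splitting $B \simeq B_1' \times B_2'$ of commutative $A$-algebras with $B_i'$ the factor cut out by $\bar e_i$; as an $A$-module $B_i' \simeq \bar e_i B \simeq e_i A \wedge_A B = A_i \wedge_A B = B_i$, which identifies these factors with the base changes above.

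The main obstacle is this last identification: one must ensure that the factor arising from the idempotent splitting of $B$ genuinely agrees, as a commutative $A_i$-algebra carrying the $G$-action, with the base change $A_i \wedge_A B$ produced by Proposition~\ref{prop:TopGalois-basechange}, and not merely as an underlying $A$-module. Granting that $G$ fixes each $\bar e_i$, so that the splitting is compatible with the action factor by factor, this reduces to tracing the idempotent-to-splitting correspondence through the functor $-\wedge_A B$, after which the two Galois conditions for $B_i/A_i$ are precisely those delivered by the base-change proposition.
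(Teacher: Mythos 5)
Your proposal is correct, and its first half coincides exactly with the paper's proof: both define $B_i = A_i \wedge_A B$ and observe that each $A_i$, being a retract of the one-cell $A$-module $A$, satisfies the hypotheses of Proposition~\ref{prop:TopGalois-basechange}, so that $B_i/A_i$ is a $G$-Galois extension. Where you diverge is in verifying the decomposition $B \simeq B_1 \times B_2$: you return to $\pi_0$, push the idempotents $e_i$ forward to $\bar e_i \in \pi_0(B)$, invoke Rognes's idempotent--splitting correspondence there, and then face the identification problem you flag at the end, namely matching the factor cut out by $\bar e_i$ with the base change $A_i \wedge_A B$ as a $G$-equivariant commutative $A_i$-algebra. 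The paper sidesteps this entirely with a one-line computation: since finite products and coproducts of $A$-modules agree, $A_1 \times A_2 \simeq A_1 \vee A_2$, and smashing with $B$ over $A$ distributes over the wedge, so
\[
B \simeq A \wedge_A B \simeq (A_1 \vee A_2) \wedge_A B
\simeq B_1 \vee B_2 \simeq B_1 \times B_2,
\]
with the factors manifestly being the base changes already in hand. Your route works --- the $\bar e_i$ are indeed $G$-fixed and the factor they cut out is $A_i \wedge_A B$, for essentially the same distributivity reason --- but the detour through $\pi_0(B)$ creates the identification obstacle rather than resolving one, and the direct wedge argument is both shorter and avoids it.
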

We are grateful to the referee who replaced our earlier clumsier
proof by the following straightforward line of argument.
\begin{proof}
Let $B_i = A_i \wedge_A B$ for $i=1,2$. Since $A_1$ and $A_2$
are retracts of a finite cell $A$-module, namely $A$ itself,
$A_i \lra B_i$ is a $G$-Galois extension for $i=1,2$ by
Proposition~\ref{prop:TopGalois-basechange}. The equivalence
of commutative $S$-algebras $A \ra A_1 \times A_2$ induces
the equivalence
\begin{equation*}
B \simeq A \wedge_A B \ra (A_1 \times A_2) \wedge_A B
\end{equation*}
and the latter is equivalent to
\[
(A_1 \vee A_2)\wedge_A B \simeq A_1 \wedge_A B \vee A_2 \wedge_A B
= B_1 \vee B_2 \simeq B_1 \times B_2.
\qedhere
\]
\end{proof}

\section{Calculations with residue fields}\label{sec:ResFld}

We recall from~\cite[\S 3]{BR:Invt} the notion of a residue
field for a commutative $S$-algebra $R$. Let $\mathfrak{m}$
be a maximal ideal in $\pi_*(R)$. If there is an $R$-module
spectrum $W$ for which the $\pi_*(R)$-module $\pi_*(W)$ is
isomorphic to $\pi_*(R)/\mathfrak{m}$, then we call $W$ a
residue field of $R$ with respect to $\mathfrak{m}$. Note
that in general no multiplicative structure on $W$ is assumed.

In the case of $\Enr_n$, there is a version of the Morava
$K$-theory spectrum $\Knr_n$ with coefficient ring
\[
\pi_*(\Knr_n) = \pi_*(\Enr_n)/\mathfrak{m} = \Fc_p[u,u^{-1}].
\]
Thus $\pi_*(\Knr_n)$ is the residue field of the local
ring $\pi_*(\Enr_n)$ and so $\Knr_n$ is a residue field
for $\Enr_n$. By work of~\cite{A}, $\Knr_n$ admits the
structure of an associative $\Enr_n$-algebra.

\medskip
\noindent
\textbf{Notation}.
To simplify notation, we set $E=\Enr_n$ and $K=\Knr_n$
from now on.

\bigskip

For two $E$-modules $M,N$, at least one of which is cofibrant,
there is a K\"unneth spectral sequence
\begin{equation}\label{eqn:KSS-Kn}
\mathrm{E}^2_{s,t} =
\Tor^{\pi_*(K)}_{s,t}
           (\pi_*(K \wedge_{E} M),\pi_*(K \wedge_{E}N))
                  \Lra \pi_*(K \wedge_{E} M \wedge_{E} N)
\end{equation}
which collapses to give a K\"unneth isomorphism of $\pi_*(K)$-modules
\begin{equation}\label{eqn:KForm-Kn}
\pi_*(K \wedge_{E} M \wedge_{E} N)
\iso
\pi_*(K \wedge_{E} M) \otimes_{\pi_*(K)} \pi_*(K \wedge_{E} N).
\end{equation}

Since these homotopy groups are $2$-periodic, we can view them
as $\Z/2$-graded modules. For a $K$-module $V$, $\pi_*(V)$ is
equivalent to a $\Z/2$-graded $\Fc_p$-vector space and then
we can consider the dimensions of the even and the odd parts
separately. For an $E$-module spectrum $M$ we set
\[
d_0=\dim_{\Fc_p}\pi_0(K\wedge_{E} M),
\quad
d_1=\dim_{\Fc_p}\pi_1(K\wedge_{E} M).
\]

\begin{lem}\label{lem:dim}
Suppose that an $E$-module spectrum $M$ satisfies
\begin{equation}\label{eqn:UnramCond}
M \wedge_{E} M \simeq \prod_X M
\end{equation}
for some finite set $X$ of cardinality $|X|=m$. If
$\pi_*(K \wedge_{E} M)$ is a non-trivial finite
dimensional $\pi_*(K)$-module, then the dimensions $d_0$
and $d_1$ satisfy one of the following conditions:
\begin{itemize}
\item
$d_1=0$ and $d_0=m$.
\item
$d_1\neq0$, $m$ is even and $d_0 = m/2 = d_1$.
\end{itemize}
In particular, if $m$ is odd, then we must have the first
condition.
\end{lem}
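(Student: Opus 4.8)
The plan is to turn the topological hypothesis into an algebraic identity for the $\Z/2$-graded $\Fc_p$-vector space $\pi_*(K\wedge_E M)$ and then solve the resulting numerical constraints. Write $V=K\wedge_E M$ and $V_i=\pi_i(V)$, so that $\dim_{\Fc_p}V_0=d_0$ and $\dim_{\Fc_p}V_1=d_1$, both finite by hypothesis. First I would smash the equivalence \eqref{eqn:UnramCond} with $K$ over $E$. Since $X$ is finite we have $\prod_X M\simeq\bigvee_X M$, and $K\wedge_E(-)$ preserves finite wedges, so
\[
K\wedge_E M\wedge_E M\simeq K\wedge_E\Bigl(\bigvee_X M\Bigr)\simeq\bigvee_X(K\wedge_E M)\simeq\prod_X V.
\]
Passing to homotopy gives an isomorphism of $\Z/2$-graded $\pi_*(K)$-modules $\pi_*(K\wedge_E M\wedge_E M)\iso\pi_*(V)^{\oplus m}$. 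On the other hand, the K\"unneth isomorphism \eqref{eqn:KForm-Kn} with $N=M$ identifies the left-hand side with $\pi_*(V)\otimes_{\pi_*(K)}\pi_*(V)$. Combining the two, I obtain an isomorphism of $\Z/2$-graded $\pi_*(K)$-modules
\[
\pi_*(V)\otimes_{\pi_*(K)}\pi_*(V)\iso\pi_*(V)^{\oplus m}.
\]

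Next I would compare dimensions degree by degree. Because $\pi_*(K)=\Fc_p[u^{\pm1}]$ is concentrated in even degrees and $2$-periodic, the even part of $\pi_*(V)\otimes_{\pi_*(K)}\pi_*(V)$ is $(V_0\otimes V_0)\oplus(V_1\otimes V_1)$ while the odd part is $(V_0\otimes V_1)\oplus(V_1\otimes V_0)$, the crucial bookkeeping point being that $V_1\otimes V_1$ contributes to the even part because $1+1\equiv0\pmod 2$. Reading off the two graded dimensions of both sides yields the system
\[
d_0^2+d_1^2=m\,d_0,\qquad 2\,d_0 d_1=m\,d_1.
\]

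Finally I would solve this over the nonnegative integers with $(d_0,d_1)\neq(0,0)$. The second equation factors as $d_1(2d_0-m)=0$. If $d_1=0$, the first equation becomes $d_0(d_0-m)=0$ and nontriviality forces $d_0=m$, the first alternative. If $d_1\neq0$, then $2d_0=m$, so $m$ is even, and substituting $d_0=m/2$ into the first equation gives $d_1^2=m^2/4$, hence $d_1=m/2=d_0$, the second alternative. Since the second alternative requires $m$ even, an odd value of $m$ forces the first, proving the final assertion. I expect the only delicate point to be the grading bookkeeping in the tensor product, together with the verification that smashing the self-equivalence \eqref{eqn:UnramCond} with $K$ commutes with the finite product; the remaining steps are a routine reduction to, and solution of, the two displayed equations.
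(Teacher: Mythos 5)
Your proposal is correct and follows essentially the same route as the paper: apply the $K$-based K\"unneth isomorphism to get $d_0^2+d_1^2$ and $2d_0d_1$ for the graded dimensions of $\pi_*(K\wedge_E M\wedge_E M)$, equate them with $md_0$ and $md_1$ coming from the splitting, and solve the resulting system. You simply spell out the elementary case analysis that the paper leaves to the reader.
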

\begin{proof}
Using the K\"unneth formula based on $K$, we have
\[
\dim_{\Fc_p}\pi_0(K\wedge_{E} M\wedge_{E} M) = d_0^2+d_1^2,
\quad
\dim_{\Fc_p}\pi_1(K\wedge_{E} M\wedge_{E} M) = 2d_0d_1.
\]
On the other hand, by the assumed splitting of $M \wedge_{E} M$
we obtain the equations
\[
d_0^2+d_1^2 = m d_0, \quad 2d_0d_1 = m d_1.
\]
Using these we establish the result.
\end{proof}

\section{Separable closure property at odd primes}\label{sec:oddprimes}

For an odd prime $p$ we can prove a general result.
\begin{thm} \label{thm:oddprimes}
Let $G$ be an arbitrary finite group and $p$ an odd prime.
Then for every $G$-Galois extension $B$ of $E$ there is a
weak equivalence of commutative $E$-algebras
\begin{equation*}
B \simeq  \prod_G E.
\end{equation*}
\end{thm}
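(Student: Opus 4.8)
The plan is to show that any $G$-Galois extension $B/E$ is forced to split completely into a product of copies of $E$ indexed by~$G$. The main tool is the residue field $K=\Knr_n$, which by the results of Section~\ref{sec:ResFld} detects the relevant structure through the K\"unneth isomorphism~\eqref{eqn:KForm-Kn}. First I would apply Corollary~\ref{cor:TopGalois-basechange-noncomm-htpy}, together with Proposition~\ref{prop:TopGalois-basechange-field} (noting $\pi_*(K)=\Fc_p[u,u^{-1}]$ is a graded field and $K$ is a finite cell $E$-module), to deduce that $\pi_*(K\wedge_E B)/\pi_*(K)$ is an algebraic $G$-Galois extension. In particular $\pi_*(K\wedge_E B)$ is a finite-dimensional $\pi_*(K)$-vector space of dimension exactly $|G|$, and the unramified condition gives the splitting $M\wedge_E M\simeq\prod_X M$ with $M=K\wedge_E B$ and $|X|=|G|$ required to invoke Lemma~\ref{lem:dim}.

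Next I would exploit that $p$ is odd. The crucial observation is that the order $|G|$ enters Lemma~\ref{lem:dim} through the parity of $m=|G|$, but what really matters is controlling the odd-degree dimension $d_1$. The plan is to argue that $d_1=0$, so that by Lemma~\ref{lem:dim} we land in the first case $d_0=m=|G|$, meaning $\pi_*(K\wedge_E B)$ is concentrated in even degrees. The reason $d_1$ must vanish at odd primes should come from the algebraic $G$-Galois extension $\pi_*(K\wedge_E B)/\Fc_p[u,u^{-1}]$: since $\Fc_p$ is separably (indeed algebraically) closed, a connected finite separable extension of the graded field $\Fc_p[u,u^{-1}]$ must be trivial, and the only way to introduce odd-degree classes would require adjoining a square root of~$u$, which creates a degree-$2$ extension incompatible with $\Fc_p$ being closed under the relevant roots of unity when $p$ is odd. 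Concretely, odd-degree elements would force a $C_2$-subextension realized by $u^{1/2}$, and I expect the odd-prime hypothesis to obstruct this.

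Once $d_1=0$ and $d_0=|G|$ are established, I would conclude that $\pi_*(K\wedge_E B)\iso\prod_G\Fc_p[u,u^{-1}]$ as a $\pi_*(K)$-algebra, so $K\wedge_E B$ splits as $\prod_G K$. The final step is to lift this splitting from the residue field back to $E$ itself. Here I would use that $B$ is a finite cell $E$-module with $\pi_*(B)$ a free (hence projective) $\pi_*(E)$-module of rank $|G|$: the idempotents giving the splitting over $K$ should lift, via the local and complete structure of $\pi_*(E)=W\Fc_p[[u_1,\ldots,u_{n-1}]][u^{\pm1}]$ together with Lemma~\ref{lem:split} applied iteratively, to produce a decomposition $B\simeq\prod_G E$ as commutative $E$-algebras.

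The hard part will be the lifting step and the careful verification that $d_1=0$. Detecting the splitting at the level of the residue field $K$ is comparatively formal, but transferring a complete decomposition of $\pi_*(K\wedge_E B)$ back to an honest equivalence $B\simeq\prod_G E$ of commutative $E$-algebras requires knowing that the algebraic idempotents in $\pi_0(B)$ are genuinely detected by $\pi_0(K\wedge_E B)$ and that no obstruction obstructs realizing the full product structure. I expect this to rest on the completeness of $\pi_*(E)$ and an idempotent-lifting argument (Hensel-type), combined with the fact that a connected $G$-Galois extension of a complete local ring with separably closed residue field is trivial, which is exactly the content that the odd-prime hypothesis feeds into through Lemma~\ref{lem:dim}.
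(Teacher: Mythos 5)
Your overall skeleton coincides with the paper's: base change along the residue field $K$, show $\pi_*(K\wedge_E B)$ is concentrated in even degrees, split the resulting separable $\Fc_p$-algebra, and lift idempotents using completeness of $\pi_0(E)$. However, the central step --- proving $d_1=0$ --- is a genuine gap in your write-up. Your proposed mechanism (that odd-degree classes would force adjoining $u^{1/2}$, obstructed by ``$\Fc_p$ being closed under the relevant roots of unity'') is not the right reason and presupposes a structure theory for graded Galois extensions of $\Fc_p[u,u^{-1}]$ that you have not established; nothing forces an odd-degree class to be a square root of $u$, and roots of unity are irrelevant here. The actual argument uses separability: by Proposition~\ref{prop:TopGalois-basechange-field} the extension $\pi_*(K\wedge_E B)/\pi_*(K)$ is a graded $G$-Galois extension, hence separable, so by the graded version of~\cite[proposition~II.2.3]{DeMI} every $\pi_*(K)$-projective module over it is projective over it. For a nonzero $z\in\pi_1(K\wedge_E B)$ the cyclic module $\pi_*(K\wedge_E B)z$ is then projective, the surjection $a\mapsto az$ splits by some $\rho$ with $\rho(z)=1+w$ and $wz=0$, whence $\rho(z^2)=z\neq 0$; but graded commutativity gives $2z^2=0$, so $z^2=0$ in odd characteristic --- a contradiction. (Note also that Lemma~\ref{lem:dim}, which you invoke, does not by itself force $d_1=0$ when $|G|$ is even, and it should be applied with $M=B$, not $M=K\wedge_E B$.)

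A second, smaller gap is the lifting step. You correctly identify that one must know the idempotents of $\pi_0(K\wedge_E B)$ are detected in $\pi_0(B)/\mathfrak{m}$, but this identification $\pi_0(K\wedge_E B)\cong\pi_0(B)/(p,u_1,\ldots,u_{n-1})$ is not formal: a priori there are Tor contributions from odd degrees. The paper disposes of these by a Bockstein spectral sequence argument applied successively to the quotients $E/(p,u_1,\ldots,u_j)$, using the already-established vanishing of $\pi_{\mathrm{odd}}(K\wedge_E B)$ together with finite generation over the Noetherian local ring $\pi_*(E)$ to show each intermediate odd homotopy group is $u_{j+1}$-torsion free and hence zero. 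You also should not assume $\pi_*(B)$ is free of rank $|G|$ over $\pi_*(E)$ at the outset; what is actually needed (and suffices for lifting idempotents via~\cite[corollary~7.5]{E:CA}) is only that $\pi_0(B)$ is finitely generated over the complete Noetherian ring $\pi_0(E)$.
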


\begin{proof}
We know from Proposition~\ref{prop:TopGalois-basechange-field}
that the Galois extension $E \lra B$ gives rise to a $G$-Galois
extension $\pi_*(K) \lra \pi_*(K \wedge_E B)$ of graded rings,
in particular, $\pi_*(K \wedge_E B)$ is a graded separable
$\pi_*(K)$-algebra. DeMeyer and Ingraham
showed in~\cite[proposition~II.2.3]{DeMI} that for a separable
(ungraded) algebra $A$ over a commutative ring $R$ any
$R$-projective $A$-module $M$ is also $A$-projective. Their
proof translates to the graded setting without any changes,
thus any $\pi_*(K)$-projective $\pi_*(K \wedge_E B)$-module
is also $\pi_*(K \wedge_E B)$-projective. Assume that $z$ is
a non-trivial element in $\pi_1(K \wedge_E B)$. Then the cyclic
$\pi_*(K \wedge_E B)$-submodule
$\pi_*(K \wedge_E B)z\subseteq \pi_*(K \wedge_E B)$ is projective
and so the surjection
$\phi\:\pi_*(K \wedge_E B) \lra \pi_*(K \wedge_E B)z$ given
by $\phi(a)=az$ is split by a $\pi_*(K \wedge_E B)$-homomorphism
$\rho\:\pi_*(K \wedge_E B)z\lra \pi_*(K \wedge_E B)$ under
which $\rho(z) = 1+w$ for some $w$ satisfying $wz=0$. Using
$\pi_*(K \wedge_E B)$-linearity we obtain
\[
\rho(z^2) = z(1+w) = z+zw = z \neq0.
\]
But we are working in odd characteristic, hence $z^2=0$.
Therefore the odd part of $\pi_*(K \wedge_E B)$ has to vanish.

Let $K'$ be the $E$-module spectrum $E/(p,u_1,\ldots,u_{n-2})$.
We consider the long exact sequence corresponding to the cofibre
sequence
\[
K' \wedge_E B \xrightarrow{u_{n-1}} K' \wedge_E B \lra K \wedge_E B.
\]
As we know from above, $\pi_*(K \wedge_E B)$ is concentrated
in even degrees. Thus we obtain the exactness of
\begin{multline*}
0 \ra \pi_{2q}(K' \wedge_E B) \xrightarrow{u_{n-1}}
\pi_{2q}(K'\wedge_E B) \xrightarrow{j} \pi_{2q}(K \wedge_E B)
\xrightarrow{\delta} \pi_{2q-1}(K' \wedge_E B) \\
\xrightarrow{u_{n-1}} \pi_{2q-1}(K' \wedge_E B) \ra 0.
\end{multline*}

The associated Bockstein spectral sequence (see~\cite[5.9.9]{We})
has $B_*^0 =\pi_*(K\wedge_E B)=0$ in odd degrees and all differentials
have degree $-1$ and are therefore trivial. Denote $\pi_*(K'\wedge_E B)$
by $A_*^0$. Then we obtain a short exact sequence
\[
0\ra A^0_i/(u_{n-1}A^0_{i} + {}_{u_{n-1}^r}A^0_i)\xrightarrow{\bar{j^r}}B^r_i
            \xrightarrow{\delta^r} {}_{u_{n-1}}A^0_{i -1}
\cap u_{n-1}^rA^0_{i-1} \ra 0
\]
for every $r \geq 0$. As we have that the evenly graded part
of $A_*^0$ is $u_{n-1}$-torsion free and that
\begin{equation*}
B_*^0=B_*^1=\cdots=B_*^\infty,
\end{equation*}
this yields that for an even degree $i= 2q$ we have
\begin{equation*}
{}_{u_{n-1}}A_{2q-1}^0 = {}_{u_{n-1}}A_{2q-1}^0 \cap u_{n-1}^rA_{2q-1}^0
\end{equation*}
for all $r\geq 0$. As $\pi_*(K'\wedge_E B)$ is a finitely generated
$\pi_*(E)$-module, we can deduce that $\pi_{2q-1}(K' \wedge_E B)$
is $u_{n-1}$-torsion free. But then the multiplication with $u_{n-1}$
has to be an isomorphism on $\pi_{2q-1}(K' \wedge_E B)$ so we see
that  $\pi_{2q-1}(K' \wedge_E B)$ is actually trivial for all $q$
and
\[
\pi_0(K \wedge_E B) \cong \pi_0(K'\wedge_E B)/(u_{n-1}).
\]
Considering the spectra $E/(p,u_1,\ldots,u_j)$ for $j=n-3,\ldots,0$
(with $p=u_0$) in a similar fashion we obtain
\begin{equation} \label{eq:pi0KB}
\pi_0(K \wedge_E B) \cong \pi_0(B)/(p,u_1,\ldots,u_{n-1})
         \cong \pi_0(K) \otimes_{\pi_0(E)} \pi_0(B).
\end{equation}

The quotient $\pi_0(B)/\mathfrak{m}$ is a separable extension
of the separably closed field $\Fc_p$, so
by~\cite[corollary~II.2.4]{DeMI}, this extension has to split
as
\begin{equation*}
\pi_0(K \wedge_E B) \cong \prod_G \Fc_p.
\end{equation*}

As $B$ is a finite cell $E$-module we know that $\pi_0(B)$ is
a finitely generated module over the Noetherian ring $\pi_0(E)$
and the calculation above shows that
\begin{equation*}
\pi_0(K \wedge_E B) \cong \pi_0(B)/\mathfrak{m}.
\end{equation*}
By the lifting of idempotents result of~\cite[corollary~7.5]{E:CA}
for instance, the orthogonal idempotents that give rise to this
splitting lift to the $\pi_0(E)$-algebra $\pi_0(B)$ and we obtain
the desired splitting of $B$ into $G$ copies of $E$.
\end{proof}

\section{Galois groups with cyclic quotients}\label{sec:cyclic}

We will consider Galois extensions of $E$ with Galois groups
having finite cyclic quotients. We note that the result
concerning these extensions is valid for all primes.
\begin{thm}\label{thm:CycQuot}
Let $B/E$ be a $G$-Galois extension where $G$ is a finite
group with a cyclic quotient of prime order. Then $B$ is
non-connected.
\end{thm}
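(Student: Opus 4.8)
The plan is to reduce to the case where $G$ is cyclic of prime order and then to detect a nontrivial idempotent after base change to the residue field $K$. For odd $p$ there is nothing to do beyond Theorem~\ref{thm:oddprimes}, which already gives $B \simeq \prod_G E$; this is non-connected since $G$ is non-trivial. So the real work is at $p=2$. I would first reduce the group: choose a normal subgroup $H \ideal G$ with $G/H \iso C_\ell$ for a prime $\ell$. By the Galois correspondence for normal subgroups of a finite Galois extension~\cite{R}, $E \lra B^{\h H}$ is a $C_\ell$-Galois extension and $B^{\h H} \lra B$ is an $H$-Galois extension. Applying Lemma~\ref{lem:split} to the latter, it suffices to show that $B^{\h H}$ is non-connected; so, renaming, I may assume $G = C_\ell$ with $\ell$ prime.

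Next I would base-change to $K$. Setting $L = \pi_*(K \wedge_E B)$, Proposition~\ref{prop:TopGalois-basechange-field} shows that $\pi_*(K) \lra L$ is an algebraic $C_\ell$-Galois extension of graded rings with $\dim_{\pi_*(K)} L = \ell$. Let $d_0, d_1$ be the dimensions of the even and odd parts of $L$ over $\Fc_p$. Lemma~\ref{lem:dim} with $m = \ell$ then forces $(d_0, d_1) = (\ell, 0)$ when $\ell$ is odd, and $(d_0, d_1) \in \{(2,0),(1,1)\}$ when $\ell = 2$. Whenever $d_1 = 0$ we have $d_0 = \ell > 1$, so $L$ has more than one primitive idempotent and is already non-connected.

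The crux is to rule out the remaining possibility $\ell = 2$, $(d_0, d_1) = (1,1)$, and this is exactly where characteristic $2$ is decisive. In that case $L_0 = \Fc_p$ and $L_1 = \Fc_p\, v$ for some homogeneous $v$ of odd degree. The generator $\sigma$ of $C_2$ acts $\pi_*(K)$-linearly and preserves degree, so $\sigma(v) = \lambda v$ with $\lambda^2 = 1$; in characteristic $2$ this means $\lambda = 1$, hence $\sigma = \id_L$. But then the unramified isomorphism $L \otimes_{\pi_*(K)} L \lra \prod_{C_2} L$ would have image the diagonal, which is impossible. Thus $(1,1)$ cannot occur, and in every surviving case $d_1 = 0$ and $d_0 = \ell$.

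It remains to pass from the residue field back to $B$. With $L$ concentrated in even degrees, $\pi_0(K \wedge_E B)$ is an $\ell$-dimensional $C_\ell$-Galois algebra over the separably closed field $\Fc_p$, and since $\ell$ is prime it splits as $\pi_0(K \wedge_E B) \iso \prod_{C_\ell} \Fc_p$. From here I would repeat the Bockstein and descent argument from the proof of Theorem~\ref{thm:oddprimes}: the vanishing of the odd part propagates through the spectra $E/(p,u_1,\ldots,u_j)$ to give $\pi_0(K \wedge_E B) \iso \pi_0(B)/\mathfrak{m}$, and the nontrivial orthogonal idempotents of $\prod_{C_\ell}\Fc_p$ then lift along the complete local ring $\pi_0(E)$ to nontrivial idempotents of $\pi_0(B)$, so $B$ is non-connected. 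I expect the one genuinely new difficulty to be the elimination of the $(1,1)$ case above: it is precisely here that the hypothesis of a cyclic quotient of prime order is used, since it is what forces the odd part of the $K$-based residue field to vanish even at the prime~$2$.
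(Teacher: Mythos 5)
Your proof is correct and follows the same overall skeleton as the paper's: pass to the homotopy fixed points $B^{\h H}$ for a normal subgroup with cyclic prime-order quotient, base change to the residue field $K$, kill the odd-degree part of $\pi_*(K\wedge_E B^{\h H})$, split $\pi_0$ over the separably closed field $\Fc_p$, lift idempotents along the complete local ring $\pi_0(E)$, and finally propagate the splitting of $B^{\h H}$ up to $B$ via Lemma~\ref{lem:split}. You differ in two local sub-arguments, both validly. For $\ell\neq p$ the paper decomposes the odd part into character eigenspaces and uses a nilpotence argument against the unramified condition, whereas you simply invoke the parity constraint of Lemma~\ref{lem:dim} (odd $m$ forces $d_1=0$); your route is shorter and avoids the character-theoretic case analysis, though it applies Lemma~\ref{lem:dim} in a situation where the paper reserves it for Case (B). For $\ell=p=2$ the paper rules out $(d_0,d_1)=(1,1)$ by computing that $\pi_1(K\wedge_E B^{\h H})^{C_2}=0$ and then citing the fixed-point theorem for $p$-groups acting on $p$-torsion groups; you instead observe that a degree-preserving involution on a one-dimensional odd part must act by a scalar $\lambda$ with $\lambda^2=1$, hence trivially in characteristic $2$, so the whole action is trivial and the unramified map lands in the diagonal of $\prod_{C_2}L$ --- a more elementary contradiction (equivalently, $L^{C_2}=L$ would violate the fixed-point condition). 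Both of your substitutions are special cases of the paper's more general tools, and the remaining steps (the Bockstein descent to $\pi_0(B^{\h H})/\mathfrak{m}$ and idempotent lifting) match the paper exactly. One cosmetic quibble: the splitting $\pi_0(K\wedge_E B^{\h H})\iso\prod_{C_\ell}\Fc_p$ follows from separability over the separably closed field $\Fc_p$ (as in \cite{DeMI}), not from $\ell$ being prime; primality only fixes the number of factors via the dimension count.
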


\begin{cor}\label{cor:CycQuot}
Every $G$-Galois extension $B$ of $E$ with finite solvable
Galois group $G$ is non-connected. In this sense, the
commutative $E_n$-algebra $E$ is a maximal connected
solvable Galois extension of $E_n$.
\end{cor}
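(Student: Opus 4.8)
The plan is to prove Theorem~\ref{thm:CycQuot} first and then derive the corollary by induction on the order of the solvable group $G$. For the theorem itself, the key observation is that a finite group $G$ with a cyclic quotient of prime order $\ell$ admits a surjection $G \twoheadrightarrow C_\ell$. I would like to use this to reduce to the case of a genuinely cyclic Galois group. The strategy is to base-change the $G$-Galois extension $E \lra B$ along the residue field $K$, obtaining from Proposition~\ref{prop:TopGalois-basechange-field} an algebraic $G$-Galois extension $\pi_*(K) \lra \pi_*(K\wedge_E B)$ of graded fields, and then to analyze the associated $C_\ell$-subextension corresponding to the fixed subring under the kernel $N = \ker(G \to C_\ell)$. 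Concretely, $\pi_*(K\wedge_E B)^N$ is a $C_\ell$-Galois extension of $\pi_*(K)$, and the goal is to show it is non-connected, since non-connectedness of a sub- or quotient-extension forces non-connectedness of $B$ via Lemma~\ref{lem:split}.

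The heart of the argument is to show that a $C_\ell$-Galois extension of the graded field $\pi_*(K) = \Fc_p[u^{\pm 1}]$ must split. First I would apply Lemma~\ref{lem:dim} with $M = K\wedge_E(B^N)$ (or the relevant sub-object) and $m = \ell$: since $\ell$ is prime, either $\ell$ is odd and we are forced into the first case $d_1 = 0$, $d_0 = \ell$, or $\ell = 2$ and we may also land in the second case with $d_0 = d_1 = 1$. In the case where the extension is concentrated in even degrees, $\pi_0(K\wedge_E B^N)$ is an ungraded $C_\ell$-Galois extension of the separably closed field $\Fc_p$, which must be the split extension $\prod_{C_\ell}\Fc_p$ by~\cite[corollary~II.2.4]{DeMI}; this yields nontrivial idempotents that lift (as in the proof of Theorem~\ref{thm:oddprimes}) to split $B^N$ and hence $B$. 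The remaining case to handle is $\ell = 2$ with odd part nonzero, where $\pi_*(K\wedge_E B^N)$ is a $2$-dimensional graded field extension of $\Fc_p[u^{\pm 1}]$ adjoining a square root of $u$; here one checks directly that a $C_2$-action of this form already produces an idempotent in $\pi_0$, or argues that such a graded field extension cannot be Galois with connected total space over the algebraically closed base.

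The main obstacle I anticipate is the graded $\ell = 2$ case: unlike odd $\ell$, the odd-degree classes need not vanish, so one cannot immediately reduce to an ungraded extension of $\Fc_p$ and invoke DeMeyer--Ingraham. The resolution should exploit that $\Fc_p$ already contains all prime-to-$p$ roots of unity and is algebraically closed, so any graded $C_2$-Galois extension of $\Fc_p[u^{\pm 1}]$ is of the form $\Fc_p[v^{\pm 1}]$ with $v^2 = u$, and the Galois action $v \mapsto \zeta v$ (with $\zeta$ a primitive square root of unity, i.e. $\zeta = -1$) forces either a splitting into idempotents at the level of $\pi_0$ or a contradiction with connectedness; this is precisely the graded-field computation that makes the prime $\ell$ the right level of generality.

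For Corollary~\ref{cor:CycQuot}, I would proceed by induction on $|G|$. A nontrivial finite solvable group $G$ has a nontrivial abelian quotient, hence a cyclic quotient of prime order, so Theorem~\ref{thm:CycQuot} applies directly to give non-connectedness of any $G$-Galois extension $B$ of $E$. (No genuine induction is even needed, since solvability already guarantees the prime cyclic quotient; the induction would only be relevant if one wanted to extract finer structural information.) This immediately gives the stated maximality: $E$ admits no nontrivial connected solvable Galois extension, so $E = \Enr_n$ is a maximal connected solvable Galois extension of $E_n$.
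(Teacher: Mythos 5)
Your derivation of the corollary from Theorem~\ref{thm:CycQuot} is exactly the paper's (implicit) one: a non-trivial finite solvable group has a non-trivial abelian, hence prime-order cyclic, quotient, so the theorem applies directly and no induction is needed. For the theorem itself your architecture also matches the paper --- pass to the intermediate extension attached to $N=\ker(G\to C_\ell)$, base-change to the residue field $K$, split the resulting algebraic $C_\ell$-Galois extension of $\Fc_p$ by DeMeyer--Ingraham, lift idempotents via the Bockstein argument, and propagate the splitting of $B^{\h N}$ to $B$ by Lemma~\ref{lem:split} --- but you diverge in two places, both legitimately. First, you split into cases by the parity of $\ell$ using Lemma~\ref{lem:dim}, whereas the paper splits according to whether $\ell=p$, handling $\ell\neq p$ by a character-eigenspace argument; your version is a mild simplification since Lemma~\ref{lem:dim} kills the odd-degree part for all odd $\ell$ at once. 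Second, for the residual case $\ell=2$ with $d_0=d_1=1$, the paper quotes the Shatz-type fixed-point lemma (a finite $p$-group acting on a non-zero $p$-torsion group has non-trivial fixed points, while $\pi_1(K\wedge_E B^{\h N})^{C_2}\subseteq\pi_1(K)=0$), whereas you propose a direct graded-field computation. Your sketch does close up, and is worth making precise: if $v$ spans $\pi_1(K\wedge_E B^{\h N})$ then unramifiedness forces $v^2\neq 0$ (otherwise $v\otimes v$ lies in the kernel of $h$, since $g(v)$ is a scalar multiple of $v$); in odd characteristic graded commutativity gives $v^2=0$, a contradiction, and in characteristic~$2$ the generator of $C_2$ sends $v$ to $\lambda v$ with $\lambda^2=1$, hence $\lambda=1$, so $v$ is a fixed element outside $\pi_*(K)$, contradicting that the invariants are the base. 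Two small points of hygiene: you should work with the homotopy fixed points $B^{\h N}$ (a $C_\ell$-Galois extension of $E$ by Rognes' fundamental theorem for normal subgroups) rather than literal fixed points of $\pi_*(K\wedge_E B)$, since the idempotent-lifting step and the final application of Lemma~\ref{lem:split} to the $N$-Galois extension $B^{\h N}\lra B$ both live at the spectrum level; and Lemma~\ref{lem:dim} should be applied to $M=B^{\h N}$ with $m=\ell$, not to $K\wedge_E B^{\h N}$.
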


Of course for odd primes this result is covered by
Theorem~\ref{thm:oddprimes}. We include a complete proof
for all primes, because a reduction to the case $p=2$ does
not yield a much shorter proof and we feel that the full
proof offers some insight.

\begin{proof}
By assumption, there is a normal subgroup $N\ideal G$ for
which one of the following holds:
\begin{enumerate}
\item[(A)]
$G/N \iso C_\ell$ with $\ell$ a prime different from $p$,
\item[(B)]
$G/N \iso C_p$.
\end{enumerate}

\noindent
Case (A):
We consider the extension $\pi_*(K) \lra \pi_*(K \wedge_E B^{\h N})$
in which $\pi_*(K \wedge_E B^{\h N})$ is a finite dimensional algebra
over the graded field $\pi_*(K)$ and is an  algebraic $C_\ell$-Galois
extension by Proposition~\ref{prop:TopGalois-basechange-field}.

The argument of~\cite[example 42]{BR:Invt} shows that the homotopy 
groups $\pi_*(K \wedge_E B^{\h N})$ have to be concentrated in even 
degrees: as $\pi_*(K \wedge_E B^{\h N})$ is a $C_\ell$-Galois
extension of $\pi_*K$, each homotopy group
$\pi_{2n+1}(K \wedge_E B^{\h N})$ is a $C_\ell$-representation and
it has a decomposition into character eigenspaces because $\ell\neq p$.
If there were odd-degree elements and if $p$ is an odd prime then
the map $x_1 \otimes x_2 \mapsto (x_1g(x_2))_{g\in G}$ would have
a non-trivial kernel, thus contradicting the unramified condition.
For $p=2$, every irreducible character has odd order, and thus
an odd degree element of the corresponding summand is nilpotent,
because some odd power lies in the invariant part which is trivial
in odd degrees. If $x$ is such an element with $x^j=0$, then
$x^{j-1} \otimes x$ would be in the kernel of the above mentioned
map, because $x^{j-1}g(x) = \lambda_g x^{j} = 0$, where
$g(x) = \lambda_gx$.

Thus we can focus on the extension
$\pi_0(K \wedge_E B^{\h N})/\pi_0(K)$. Now $\pi_0(K) = \Fc_p$
is a separably closed field, so this extension has to split
completely,
\ie,
\[
\pi_0(K \wedge_E B^{\h N}) \cong \prod_{C_\ell}\Fc_p.
\]

We know that $\pi_0(B^{\h N})$ is a finitely generated
$\pi_0(E)$-module, and $\pi_0(E)$ is complete with respect
to the maximal ideal $\mathfrak{m} \ideal \pi_0(E)$. A
Bockstein spectral sequence argument similar to the proof
of Theorem~\ref{thm:oddprimes} shows that
$\pi_0(K \wedge_E B^{\h N}) \cong \pi_0(B^{\h N})/\mathfrak{m}$.
Therefore by the usual lifting of idempotents result
of~\cite[corollary~7.5]{E:CA}, the $\ell$ orthogonal idempotents
that give rise to this splitting lift to the $\pi_0(E)$-algebra
$\pi_0(B^{\h N})$. Thus we obtain a splitting of $\pi_0(E)$-algebras
\[
\pi_0(B^{\h N}) \iso \prod_{C_\ell}\pi_0(E).
\]
and accordingly
\begin{equation*}
B^{\h N} \simeq \prod_{C_\ell} E
\end{equation*}
so we see that $B^{hN}$ splits completely. Now by
Lemma~\ref{lem:split}, in Case~(A) we obtain a non-trivial
splitting of~$B$.

\smallskip
\noindent
Case (B):
When $G/N \iso C_p$, we consider two cases.

First let us assume that
\[
\dim_{\Fc_p}\pi_0(K\wedge_E B^{\h N}) = p,
\quad
\dim_{\Fc_p}\pi_1(K\wedge_E B^{\h N}) = 0.
\]
By Proposition~\ref{prop:TopGalois-basechange-field}, the
pair $\pi_0(K\wedge_E B^{\h N})/\pi_0(K)$ forms an algebraic
$C_p$-Galois extension. As in Case (A), we can now deduce
that this extension splits, so
\[
\pi_0(K\wedge_E B^{\h N}) \iso \prod_{C_p}\pi_0(K).
\]
From this isomorphism we obtain $p$ orthogonal idempotents
in $\pi_0(K\wedge_E B^{\h N})$, each realised by a map
\[
K \lra K\wedge_E B^{\h N}.
\]
Again we can lift the idempotents that cause this splitting
because of the completeness of $\pi_0(E)$ and therefore we
can realise the corresponding splitting as
\[
B^{\h N}\simeq \prod_G E.
\]

For $p=2$, according to Lemma \ref{lem:dim} we have to exclude
the possibility that
\[
\dim_{\Fc_p}\pi_0(K\wedge_E B^{\h N}) = 1,
\quad
\dim_{\Fc_p}\pi_1(K\wedge_E B^{\h N}) = 1.
\]
As the $C_2$-action preserves degree, we know that
\[
\dim_{\Fc_p}\pi_0(K\wedge_E B^{\h N})^{C_2} = 1,
\quad
\dim_{\Fc_p}\pi_1(K\wedge_E B^{\h N})^{C_2} = 0
\]
because $\pi_*(K)$ is concentrated in even degrees. Thus the
$C_2$-action on $\pi_0(K\wedge_E B^{\h N})$ has fixed points,
whereas the action on $\pi_1(K\wedge_E B^{\h N})$ must have
no non-trivial fixed points.

To finish, we adapt the argument in the proof
of~\cite[proposition~17]{SS} to show that
$\pi_1(K\wedge_E B^{\h N})$ must be trivial: If a finite $p$-group
$P$ acts on an arbitrary abelian $p$-torsion group $M$, then for
any non-zero element $x\in M$, the subgroup of $M$ generated by
the $P$-orbit of~$x$ is a finite $p$-group which is also a
$P$-submodule and so by~\cite[proposition~17]{SS} it has non-trivial
fixed points.

\medskip
Thus in each of Cases (A) and (B), $B^{\h N}$ is not connected
and therefore by using Lemma~\ref{lem:split}, we see that~$B$
as an $N$-Galois extension of $B^{\h N}$ is not connected either.
\end{proof}

\section{The $K(n)$-local case} \label{sec:knlocal}

Again we let $(E, K)$ be the pair $(\Enr_n, \Knr_n)$, but note
that the discussion in this section carries over to the pairs
$(E_n,K_n)$ and $(\widehat{E(n)}, K(n))$ as well. Let
$\mathcal{D}_E$ denote the stable homotopy category of $E$-module
spectra
and let $\mathcal{D}_{E,K}$ be the full subcategory generated by
$K$-local $E$-modules.
We denote the localization functor from $\mathcal{D}_{E}$ to
$\mathcal{D}_{E,K}$ by $L = L_{E,K}$.

\begin{rem} \label{rem:SorE}
We note that by~\cite[proposition~2.2]{Ho:colim}, for
$X\in\mathcal{D}_E$ (which gives rise to an element
$X\in\mathcal{D}_S$ by restriction of scalars)
\[
X\in\mathcal{D}_{E,K} \quad\Longleftrightarrow\quad
                                   X\in\mathcal{D}_{S,K}.
\]
This implies that for an $E$-module spectrum $X$ the two conditions
$K \wedge X \simeq *$ and $K \wedge_E X \simeq *$ are equivalent.
\end{rem}

The aim of the following is to provide a reference for the fact
that dualizable objects in $\mathcal{D}_{E,K}$ are retracts of
finite cell $E$-modules. The argument we present here is due
to Mark Hovey and we are grateful to him for allowing us to
include it here. We recall from~\cite[definition~1.5]{HS} the
definitions of dualizable and $F$-small.

We note that $K \wedge_E X$ is in $\mathcal{D}_{E,K}$, because
it is a $K$-module spectrum and therefore it is $K$-local. We
also know that $K$ is small in $\mathcal{D}_{E,K}$ since it is
a finite cell $E$-module. This can be seen by expressing it as
$K = E/\mathfrak{m}$ with $\mathfrak{m}=(p,u_1,\ldots,u_{n-1})$,
where the generating sequence of $\mathfrak{m}$ is regular. More
generally there are finite cell $E$-module spectra $E/\mathfrak{m}^s$
($s\geq 1$) which fit together to form the $\mathfrak{m}$-adic tower
\begin{equation}\label{eqn:m-adictower}
K = E/\mathfrak{m} \lla E/\mathfrak{m}^2 \lla E/\mathfrak{m}^3
                                                     \lla \cdots
\end{equation}
constructed in full generality in~\cite{BL}.
By \cite[theorem~1.1]{Wu}, this is can be constructed as a tower
of associative $E$-algebras. The fibre of the map
$E/\mathfrak{m}^{s+1}\lra E/\mathfrak{m}^s$ is a finite wedge of
suspensions of $K$, and this can be used to show that the Bousfield
classes of $E/\mathfrak{m}^s$ and $K$ coincide.

We also remark that if $X$ is a retract of a finite cell $E$-module,
then $\pi_*(X)$ is a finitely generated $\pi_*(E)$-module since
$\pi_*(E)$ is Noetherian; the converse also holds since $\pi_*(E)$
is a regular local ring.

\begin{lem}\label{lem:E-localfiniteness}
If $X$ is dualizable in $\mathcal{D}_{E,K}$, then for an arbitrary
indexing set~$A$ and $Y_\alpha \in \mathcal{D}_{E,K}$, the natural
map
\begin{equation} \label{eq:fsmall}
L\bigvee_{\alpha \in A} F_E(X, Y_\alpha)
     \lra F_E(X,L\bigvee_{\alpha \in A}Y_\alpha)
\end{equation}
is an isomorphism, \ie, $X$ is $F$-small in $\mathcal{D}_{E,K}$
in the sense of \cite[definition~1.5]{HS}.
\end{lem}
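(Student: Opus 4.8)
The plan is to realize $\mathcal{D}_{E,K}$ as a closed symmetric monoidal category and then invoke the standard fact that a dualizable object has an internal function functor that preserves arbitrary coproducts. First I would record the monoidal structure on $\mathcal{D}_{E,K}$: the product is $-\wedge_{E,K}-:=L(-\wedge_E-)$ with unit $LE$, the coproduct of a family $\{Y_\alpha\}$ is $L\bigvee_\alpha Y_\alpha$ (since $L$ is a left adjoint it preserves the coproduct $\bigvee$ computed in $\mathcal{D}_E$), and the internal hom is the restriction of $F_E(-,-)$ to local objects. The one point needing checking here is that $F_E(X,Y)$ is again $K$-local whenever $Y$ is: for any $K$-acyclic $W$ the object $W\wedge_E X$ is again $K$-acyclic (because $K\wedge_E W\simeq *$ forces $K\wedge_E W\wedge_E X\simeq *$), so $[W,F_E(X,Y)]_E\cong[W\wedge_E X,Y]_E=0$. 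With this in hand the adjunction $X\wedge_{E,K}-\dashv F_E(X,-)$ on $\mathcal{D}_{E,K}$ follows from the $E$-module adjunction $X\wedge_E-\dashv F_E(X,-)$ together with the universal property of $L$.

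Next I would use dualizability. By the definition recalled from \cite[definition~1.5]{HS}, $X$ being dualizable in $\mathcal{D}_{E,K}$ means precisely that the natural comparison map $DX\wedge_{E,K}Y\lra F_E(X,Y)$ is an equivalence for every $Y\in\mathcal{D}_{E,K}$, where $DX=F_E(X,LE)$ is the dual. Thus on $\mathcal{D}_{E,K}$ there is a natural equivalence $F_E(X,-)\simeq DX\wedge_{E,K}-$.

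Finally I would conclude by commuting the product past the coproduct. The functor $DX\wedge_{E,K}-$ is a left adjoint on $\mathcal{D}_{E,K}$ (its right adjoint is $F_E(DX,-)$), hence preserves all coproducts of $\mathcal{D}_{E,K}$, which as noted are computed by $L\bigvee$. Therefore
\[
L\bigvee_{\alpha}F_E(X,Y_\alpha)\simeq L\bigvee_{\alpha}\bigl(DX\wedge_{E,K}Y_\alpha\bigr)\simeq DX\wedge_{E,K}\Bigl(L\bigvee_{\alpha}Y_\alpha\Bigr)\simeq F_E\Bigl(X,L\bigvee_{\alpha}Y_\alpha\Bigr),
\]
and a check that each step is induced by the relevant unit and assembly maps identifies this composite with the natural map \eqref{eq:fsmall}, proving it is an isomorphism.

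As for the main obstacle: the genuinely load-bearing input is the duality equivalence $F_E(X,-)\simeq DX\wedge_{E,K}-$, but since in \cite{HS} this is essentially the definition of dualizability there is little to prove there. The real care goes into the bookkeeping around $L$: verifying that $F_E(X,Y)$ stays $K$-local, that the coproduct in $\mathcal{D}_{E,K}$ is $L\bigvee$, and---if one prefers to avoid the abstract closed-monoidal formalism---that $DX\wedge_E-$ preserves $K$-equivalences, which lets one slide $L$ through the smash. This last point follows from the collapse of the K\"unneth spectral sequence over the graded field $\pi_*K$, giving $K\wedge_E DX\wedge_E(-)\simeq(K\wedge_E DX)\wedge_K(K\wedge_E(-))$. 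These are the steps where an error could creep in, so I would state them explicitly rather than treat them as purely formal.
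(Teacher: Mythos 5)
Your proposal is correct and follows essentially the same route as the paper: the paper's proof is exactly the three-step computation $F_E(X,L\bigvee_\alpha Y_\alpha)\cong L(F_E(X,E)\wedge_E L\bigvee_\alpha Y_\alpha)\cong L\bigvee_\alpha(F_E(X,E)\wedge_E Y_\alpha)\cong L\bigvee_\alpha F_E(X,Y_\alpha)$, using dualizability to replace $F_E(X,-)$ by smashing with the dual and then commuting the (localized) smash past the coproduct. You simply make explicit the bookkeeping (locality of $F_E(X,Y)$, the coproduct in $\mathcal{D}_{E,K}$ being $L\bigvee$) that the paper leaves implicit.
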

\begin{proof}
As $X$ is dualizable in $\mathcal{D}_{E,K}$, we obtain
\begin{align*}
F_E\biggl(X,L \bigvee_{\alpha \in A} Y_\alpha\biggr)
 & \cong
 L\biggl(F_E(X,E)\wedge_E L\bigvee_{\alpha\in A}Y_\alpha\biggr) \\
  &\cong L\bigvee_{\alpha \in A} (F_E(X,E)\wedge_E Y_\alpha) \\
  &\cong L\bigvee_{\alpha \in A} F_E(X,Y_\alpha).
\qedhere
\end{align*}
\end{proof}

\begin{lem}\label{lem:F-small}
If $X$ is $F$-small in $\mathcal{D}_{E,K}$, then $K \wedge_E X$
is also $F$-small in $\mathcal{D}_{E,K}$.
\end{lem}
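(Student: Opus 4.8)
The plan is to reduce the $F$-smallness of $K\wedge_E X$ to the two facts already in hand: that $X$ is $F$-small by hypothesis, and that $K$ is $F$-small because it is a dualizable object of $\mathcal{D}_{E,K}$ (being a finite cell $E$-module, so that Lemma~\ref{lem:E-localfiniteness} applies). The bridge between them is the tensor--hom adjunction for $E$-modules,
\[
F_E(K\wedge_E X,Y)\cong F_E(X,F_E(K,Y)),
\]
which is natural in $Y$.

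Concretely, given an indexing set $A$ and objects $Y_\alpha\in\mathcal{D}_{E,K}$, I would run the chain
\[
L\bigvee_{\alpha}F_E(K\wedge_E X,Y_\alpha)
\cong L\bigvee_{\alpha}F_E\bigl(X,F_E(K,Y_\alpha)\bigr)
\cong F_E\Bigl(X,L\bigvee_{\alpha}F_E(K,Y_\alpha)\Bigr),
\]
where the first isomorphism is the adjunction above and the second is the $F$-smallness of $X$ applied to the family $\{F_E(K,Y_\alpha)\}_\alpha$. Applying the $F$-smallness of $K$ in the inner variable then gives
\[
F_E\Bigl(X,L\bigvee_{\alpha}F_E(K,Y_\alpha)\Bigr)
\cong F_E\Bigl(X,F_E\bigl(K,L\bigvee_{\alpha}Y_\alpha\bigr)\Bigr)
\cong F_E\Bigl(K\wedge_E X,L\bigvee_{\alpha}Y_\alpha\Bigr),
\]
the last step being the adjunction once more. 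Chasing the definitions, the resulting composite is precisely the natural comparison map, so $K\wedge_E X$ is $F$-small.

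The step that needs genuine justification — and the main obstacle — is that the second isomorphism above is legitimate, that is, that the family $\{F_E(K,Y_\alpha)\}_\alpha$ actually lies in $\mathcal{D}_{E,K}$, since the $F$-smallness of $X$ is only available for $K$-local inputs. I would establish this by a short acyclicity argument: if $W$ is $K$-acyclic, meaning $K\wedge_E W\simeq*$ (equivalently $K\wedge W\simeq*$ by Remark~\ref{rem:SorE}), then $K\wedge_E(W\wedge_E K)\cong(K\wedge_E W)\wedge_E K\simeq*$, so $W\wedge_E K$ is again $K$-acyclic; hence $[W,F_E(K,Y_\alpha)]_*\cong[W\wedge_E K,Y_\alpha]_*=0$ because $Y_\alpha$ is $K$-local. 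Thus $F_E(K,Y_\alpha)$ is indeed $K$-local. The remaining bookkeeping — verifying that each displayed isomorphism is natural in the $Y_\alpha$ and assembles into the standard comparison map, and that $K$ qualifies as dualizable in $\mathcal{D}_{E,K}$ as a finite cell $E$-module — is routine.
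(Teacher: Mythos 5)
Your proof is correct, but it takes a route that differs from the paper's in an instructive way. The paper's proof works with homotopy classes of maps rather than with function spectra: it applies the adjunction $[K\wedge_E X,Z]\cong[K,F_E(X,Z)]$ to move $X$ into the function spectrum, invokes the $F$-smallness of $X$ in the form \eqref{eq:fsmall}, and then uses the \emph{smallness} of $K$ (as a finite cell $E$-module) to conclude $[K\wedge_E X,L\bigvee_{\alpha}Y_\alpha]\cong\bigoplus_{\alpha}[K\wedge_E X,Y_\alpha]$ --- that is, it really establishes the external smallness of $K\wedge_E X$, which is the property consumed by Lemma~\ref{lem:fdimvs}. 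You instead move $K$ into the function spectrum via $F_E(K\wedge_E X,Y)\simeq F_E(X,F_E(K,Y))$ and trade the smallness of $K$ for its $F$-smallness (via dualizability in $\mathcal{D}_{E,K}$ and Lemma~\ref{lem:E-localfiniteness}); this forces the extra verification that the auxiliary family $\{F_E(K,Y_\alpha)\}_{\alpha}$ lies in $\mathcal{D}_{E,K}$, which you correctly identify as the delicate point and settle with the right acyclicity argument, and in return it yields the internal comparison equivalence exactly as the lemma is phrased. Both arguments are sound; the only caveat is that if your proof were spliced into the paper, Lemma~\ref{lem:fdimvs} would still want the external smallness of $K\wedge_E X$, which your internal equivalence does not immediately deliver (one cannot simply apply $\pi_0$ and pull the localization out of the wedge, since the unit of $\mathcal{D}_{E,K}$ need not be small), so the paper's $[-,-]$-level computation, or an additional step, would still be needed at that point.
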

\begin{proof}
The standard adjunction yields
\[
[K \wedge_E X, L\bigvee_{\alpha \in A} Y_\alpha]
             =  [K, F_E(X,L\bigvee_{\alpha \in A} Y_\alpha)].
\]
As $X$ is $F$-small this coincides with
$[K,L\bigvee_{\alpha \in A}F_E(X,Y_\alpha)]$ and the smallness
of $K$ turns this into
\[
\bigoplus_{\alpha \in A}[K,F_E(X,Y_\alpha)]
 \cong \bigoplus_{\alpha \in A}[K \wedge_E X, Y_\alpha].
\qedhere
\]
\end{proof}

\begin{lem}\label{lem:fdimvs}
If $K \wedge_E X$ is small in $\mathcal{D}_{E,K}$, then
$\pi_*(K \wedge_E X)$ is a finite-dimensional $\pi_*(K)$-vector
space.
\end{lem}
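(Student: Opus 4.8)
The plan is to exploit the fact that $\pi_*(K)=\Fc_p[u,u^{-1}]$ is a graded field, so that the $K$-module spectrum $M:=K\wedge_E X$ splits as a wedge of suspensions of $K$, and then to play the identity map of $M$ against its own smallness to force that wedge to be finite.

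First I would fix a homogeneous $\pi_*(K)$-basis of $\pi_*(M)$ indexed by a set $J$ and use the structure theory of modules over a graded field to obtain a weak equivalence of $K$-modules $M\simeq\bigvee_{j\in J}\Sigma^{d_j}K$, where the summand $\Sigma^{d_j}K$ realises the $j$-th basis element. Each $\Sigma^{d_j}K$ is a $K$-module and hence lies in $\mathcal{D}_{E,K}$; moreover an arbitrary wedge of $K$-modules is again a $K$-module and therefore $K$-local, so the $\mathcal{D}_{E,K}$-coproduct agrees with the honest wedge, $L\bigvee_{j\in J}\Sigma^{d_j}K\simeq\bigvee_{j\in J}\Sigma^{d_j}K\simeq M$.

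Next, since $M$ is assumed small in $\mathcal{D}_{E,K}$, applying smallness to the family $\{Y_j=\Sigma^{d_j}K\}_{j\in J}$ yields a natural isomorphism
\[
\bigoplus_{j\in J}[M,\Sigma^{d_j}K]\xrightarrow{\iso}[M,L\bigvee_{j\in J}\Sigma^{d_j}K]=[M,M].
\]
The identity map $\id_M$ therefore lies in the image of a finitely supported tuple, so there is a finite subset $J_0\subseteq J$ through whose sub-wedge $\id_M$ factors; that is, $M$ is a retract of the finite wedge $\bigvee_{j\in J_0}\Sigma^{d_j}K$. Passing to homotopy, $\pi_*(M)$ is a direct summand of the finite-dimensional $\pi_*(K)$-vector space $\bigoplus_{j\in J_0}\Sigma^{d_j}\pi_*(K)$, so $J$ must already have been finite and $\pi_*(M)$ is finite-dimensional over $\pi_*(K)$, as claimed.

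The routine parts are the splitting of $K$-modules and the retract-to-finite-dimension step; the one point that needs care — and the main thing to get right — is the identification of the localized coproduct $L\bigvee$ in $\mathcal{D}_{E,K}$ with the honest wedge, which is what lets us feed the summands back into the definition of smallness and makes $\id_M$ factor through finitely many of them. This rests on the observation, already used in the excerpt, that every $K$-module spectrum is $K$-local.
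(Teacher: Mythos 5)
Your proposal is correct and follows essentially the same route as the paper: split $K\wedge_E X$ as a wedge of suspensions of $K$ using that $\pi_*(K)$ is a graded field, observe that this wedge is already $K$-local so it coincides with the coproduct in $\mathcal{D}_{E,K}$, and then use smallness to factor the identity (equivalently, the isomorphism onto the wedge) through a finite subwedge. The extra care you take in justifying $L\bigvee\simeq\bigvee$ for wedges of $K$-modules is exactly the point the paper leaves implicit.
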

\begin{proof}
As $K$ is a field spectrum $K \wedge_E X$ splits as
\[
K \wedge_E X \cong \bigvee_i \Sigma^{\nu_i} K.
\]
Thus, if $K \wedge_E X$ is small, the isomorphism
\[
K \wedge_E X \lra \bigvee_i \Sigma^{\nu_i} K
                 = L\biggl(\bigvee_i\Sigma^{\nu_i} K\biggr)
\]
factors through a finite subwedge and hence the wedge must
be finite.
\end{proof}

\begin{lem}\label{lem:homlim}
If $X \in \mathcal{D}_{E}$, then $LX$ is isomorphic to the
homotopy limit
\[
\ds\holim_s(E/\mathfrak{m}^s \wedge_E X)
\]
taken in the category of $E$-module spectra.
\end{lem}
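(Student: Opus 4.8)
The plan is to identify the homotopy limit directly as a $K$-localization by verifying the two defining properties of $L=L_{E,K}$. Write $Y=\holim_s(E/\mathfrak{m}^s\wedge_E X)$, and let $\eta\:X\lra Y$ be the natural map assembled from the compatible maps $X=E\wedge_E X\lra E/\mathfrak{m}^s\wedge_E X$ induced by the quotients $E\lra E/\mathfrak{m}^s$. It suffices to show (i) that $Y$ is $K$-local and (ii) that $\eta$ is a $K$-equivalence, \ie\ that $K\wedge_E\eta$ is a weak equivalence; then the universal property of Bousfield localization gives $LX\simeq LY\simeq Y$.

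For (i) I would argue by induction on $s$ using the tower~\eqref{eqn:m-adictower}. At the bottom $E/\mathfrak{m}\wedge_E X=K\wedge_E X$ is a $K$-module, hence $K$-local. Since the fibre of $E/\mathfrak{m}^{s+1}\lra E/\mathfrak{m}^s$ is a finite wedge of suspensions of $K$, smashing with $X$ produces a cofibre sequence whose outer terms are $K$-local (the fibre term being a finite wedge of suspensions of $K\wedge_E X$, and $E/\mathfrak{m}^s\wedge_E X$ by induction). As the full subcategory of $K$-local objects is closed under cofibre sequences, finite wedges and homotopy limits, each $E/\mathfrak{m}^s\wedge_E X$ is $K$-local and therefore so is $Y$.

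For (ii) I would first exploit that $K$ is a finite cell $E$-module, hence dualizable in $\mathcal{D}_E$ with dual $F_E(K,E)$, so that $K\wedge_E(-)\simeq F_E(F_E(K,E),-)$ commutes with homotopy limits. Thus $K\wedge_E Y\simeq\holim_s(K\wedge_E E/\mathfrak{m}^s\wedge_E X)$, and the claim reduces to showing that the natural map $K\wedge_E X\lra\holim_s(K\wedge_E E/\mathfrak{m}^s\wedge_E X)$ is an equivalence. By the K\"unneth isomorphism~\eqref{eqn:KForm-Kn} over the graded field $\pi_*(K)$ one has $\pi_*(K\wedge_E E/\mathfrak{m}^s\wedge_E X)\iso\pi_*(K\wedge_E E/\mathfrak{m}^s)\otimes_{\pi_*(K)}\pi_*(K\wedge_E X)$, so everything is controlled by the tower $\{\pi_*(K\wedge_E E/\mathfrak{m}^s)\}_s$. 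The crux is the purely algebraic statement that, writing $C_s=\mathrm{fib}(E\lra E/\mathfrak{m}^s)$, the tower $\{K\wedge_E C_s\}_s$ is pro-trivial; equivalently, that the natural maps exhibit $\{\pi_*(K\wedge_E E/\mathfrak{m}^s)\}_s$ as pro-isomorphic to the constant pro-object $\pi_*(K)$. This is exactly where the Artin--Rees lemma for the Noetherian local ring $\pi_*(E)$ enters, via the pro-vanishing of the higher $\Tor^{\pi_*(E)}_i(\pi_*(K),\pi_*(E/\mathfrak{m}^s))$ for $i\geq1$ and the constancy of the $\Tor_0$ term; and this step is the main obstacle of the argument. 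I expect the rest to be formal: pro-triviality is preserved after $\otimes_{\pi_*(K)}\pi_*(K\wedge_E X)$, so the tower satisfies the Mittag--Leffler condition with vanishing $\lim^1$, and the Milnor sequence computing $\pi_*$ of the homotopy limit yields $\pi_*\holim_s(K\wedge_E E/\mathfrak{m}^s\wedge_E X)\iso\pi_*(K\wedge_E X)$, realized by the natural map.

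Combining (i) and (ii), the map $\eta\:X\lra Y$ is a $K$-equivalence into a $K$-local $E$-module, so $Y$ represents $LX$ and the asserted isomorphism $LX\iso\holim_s(E/\mathfrak{m}^s\wedge_E X)$ follows. The only nontrivial ingredient beyond formal manipulation is the pro-vanishing of the higher $\Tor$-groups, for which I would invoke the standard Artin--Rees argument in the Noetherian regular local ring $\pi_*(E)$.
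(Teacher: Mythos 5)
Your proposal is correct and follows essentially the same strategy as the paper: show the homotopy limit is $K$-local and that the natural map is a $K$-equivalence by commuting $K\wedge_E(-)$ past the limit and using that the tower $\{\pi_*(K\wedge_E E/\mathfrak{m}^s)\}_s$ is essentially constant with value $\pi_*(K)$. The only cosmetic differences are that the paper gets $K$-locality of each stage from its $E/\mathfrak{m}^s$-module structure together with the equality of Bousfield classes rather than by induction up the tower, and it cites~\cite[corollary~5.11]{BL} for the key fact that the reduction maps have image $\pi_*(K)$, which is the input you propose to rederive via Artin--Rees.
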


\begin{proof}
Each term $E/\mathfrak{m}^s \wedge_E X$ is an
$E/\mathfrak{m}^s$-module spectrum and therefore it is
$E/\mathfrak{m}^s$-local which is equivalent to being $K$-local.
As we know that the Bousfield classes of $E/\mathfrak{m}^s$
and $K=E/\mathfrak{m}$ coincide for all $s\geq 1$, the homotopy
limit\/ $\ds\holim_s(E/\mathfrak{m}^s \wedge_E X)$
is $K$-local.

We know that $K \wedge_E X$ is already $K$-local and the image
of the reduction homomorphism
\begin{equation*}
\pi_*(K\wedge_E E/\mathfrak{m}^{s+1})
                       \lra\pi_*(K\wedge_E E/\mathfrak{m}^s)
\end{equation*}
is $\pi_*(K)$ by~\cite[corollary~5.11]{BL}, hence using the
commutative diagram coming from the K\"unneth isomorphism
\[
\xymatrix{
\pi_*(K \wedge_E E/\mathfrak{m}^{s+1} \wedge_E X)
                        \ar[r]^{\iso\ph{abcdefgh}}\ar[d]
& \pi_*(K \wedge_E E/\mathfrak{m}^{s+1})\otimes_{\pi_*(K)}\pi_*(K
 \wedge_E X) \ar[d] \\
\pi_*(K \wedge_E E/\mathfrak{m}^s \wedge_E X) \ar[r]^{\iso\ph{abcdefgh}}
& \pi_*(K \wedge_E E/\mathfrak{m}^s)\otimes_{\pi_*(K)}\pi_*(K \wedge_E X)
}
\]
we find that
\begin{equation*}
\pi_*(\holim(E/\mathfrak{m}^n \wedge_E K \wedge_E X))
                              \cong \pi_*(K \wedge_E X).
\end{equation*}
Therefore the map
$\ds X \lra \holim_s(E/\mathfrak{m}^s \wedge_E X)$ is a
$K$-equivalence because
\begin{align*}
K \wedge_E \holim_s(E/\mathfrak{m}^s \wedge_E X)
      &\cong \holim_s(K \wedge_E E/\mathfrak{m}^s \wedge_E X) \\
      & \cong K \wedge_E X.
\qedhere
\end{align*}
\end{proof}
\begin{lem}\label{lem:L-complete}
If $X$ is in $\mathcal{D}_{E,K}$, then its homotopy $\pi_*(X)$
is an $L$-complete $\pi_*(E)$-module in the sense
of\/~\cite[definition~A.5]{HS}.
\end{lem}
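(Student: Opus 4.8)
The plan is to use that $X$ is already local to rewrite it as a homotopy limit, and then to read off $L$-completeness from the associated Milnor sequence together with the closure properties of $L$-complete modules established in the Hovey--Strickland appendix. Since $X\in\mathcal{D}_{E,K}$ we have $X\simeq LX$, so Lemma~\ref{lem:homlim} supplies a weak equivalence $X\simeq\holim_s(E/\mathfrak{m}^s\wedge_E X)$ of $E$-modules. The homotopy of this homotopy limit fits into the usual $\lim$--$\lim^1$ short exact sequence
\[
0\ra{\lim_s}^1\pi_{*+1}(E/\mathfrak{m}^s\wedge_E X)\ra\pi_*(X)\ra\lim_s\pi_*(E/\mathfrak{m}^s\wedge_E X)\ra 0,
\]
so it suffices to show that both outer terms are $L$-complete $\pi_*(E)$-modules and then invoke closure under extensions.

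First I would check that each individual term of the tower is $L$-complete. Recall from the construction of the $\mathfrak{m}$-adic tower~\eqref{eqn:m-adictower} that, by~\cite[theorem~1.1]{Wu}, each $E/\mathfrak{m}^s$ is an associative $E$-algebra, and that it is built by killing $\mathfrak{m}^s$, so that $\mathfrak{m}^s$ acts as zero on $\pi_*(E/\mathfrak{m}^s)$ (indeed $\pi_*(E/\mathfrak{m}^s)\cong\pi_*(E)/\mathfrak{m}^s$). Consequently $E/\mathfrak{m}^s\wedge_E X$ is an $E/\mathfrak{m}^s$-module spectrum, and $\pi_*(E/\mathfrak{m}^s\wedge_E X)$ is a module over $\pi_*(E/\mathfrak{m}^s)$, hence annihilated by $\mathfrak{m}^s$. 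Any $\pi_*(E)$-module annihilated by $\mathfrak{m}^s$ carries the finite filtration by the submodules $\mathfrak{m}^iN$, whose subquotients $\mathfrak{m}^iN/\mathfrak{m}^{i+1}N$ are $\pi_*(K)$-vector spaces; since $\pi_*(K)$-vector spaces are $L$-complete and the $L$-complete modules are closed under extensions by~\cite[theorem~A.6]{HS}, each $\pi_*(E/\mathfrak{m}^s\wedge_E X)$ is $L$-complete.

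It then remains to pass from the tower to its inverse limit and derived inverse limit. For this I would again appeal to~\cite[theorem~A.6]{HS}, which asserts that the category of $L$-complete $\pi_*(E)$-modules is closed under inverse limits and under the formation of $\lim^1$ of a tower. Applied to the tower $\{\pi_*(E/\mathfrak{m}^s\wedge_E X)\}_s$, this shows that $\lim_s\pi_*(E/\mathfrak{m}^s\wedge_E X)$ and ${\lim_s}^1\pi_{*+1}(E/\mathfrak{m}^s\wedge_E X)$ are both $L$-complete. A final application of closure under extensions to the displayed short exact sequence then yields that $\pi_*(X)$ is $L$-complete, as required.

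The step I expect to require the most care is the handling of the $\lim^1$-term. Closure of $\hat{\mathcal{M}}$ under honest inverse limits is formal, but the statement that $\lim^1$ of a tower of $L$-complete modules is again $L$-complete is the one genuinely derived-functor input, and one must ensure that the relevant assertion in the Hovey--Strickland appendix is invoked in exactly the right form and that the $\lim^1$-term is actually controlled rather than silently discarded. Should this prove awkward, an alternative would be to verify directly that the tower $\{\pi_*(E/\mathfrak{m}^s\wedge_E X)\}_s$ is Mittag-Leffler, forcing the $\lim^1$-term to vanish and reducing the argument to the more elementary closure under inverse limits.
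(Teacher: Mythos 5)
Your proposal is correct and takes essentially the same route as the paper's own proof: each term of the $\mathfrak{m}$-adic tower has bounded $\mathfrak{m}$-torsion homotopy and is therefore $L$-complete, and the Milnor sequence together with the closure of $L$-complete modules under limits, $\lim^1$-terms and extensions from~\cite[theorem~A.6]{HS} yields the result. The extra detail you supply (the finite filtration with $\pi_*(K)$-vector-space subquotients) merely spells out the step the paper leaves implicit, so no further changes are needed.
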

\begin{proof}
Each term $\pi_*(E/\mathfrak{m}^s \wedge_E X)$ is bounded
$\mathfrak{m}$-torsion and hence it is $L$-complete. But
the class of $L$-complete modules is closed under limits,
$\lim^1$-terms and extensions~\cite[theorem~A.6]{HS}, and
therefore the Milnor sequence yields the result.
\end{proof}

\begin{lem}\label{lem:fingen}
If $X\in \mathcal{D}_{E,K}$ and $\pi_*(K \wedge_E X)$ is
finite-dimensional over $\pi_*K$, then $\pi_*(X)$ is finitely
generated over $\pi_*(E)$. In particular, $X$ is a retract
of a finite cell $E$-module.
\end{lem}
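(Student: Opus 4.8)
```latex
The plan is to deduce finite generation of $\pi_*(X)$ over $\pi_*(E)$
from the finite-dimensionality of $\pi_*(K\wedge_E X)$ by a Nakayama-type
argument, using the $L$-completeness established in
Lemma~\ref{lem:L-complete} together with the identification of $LX$ as
the $\mathfrak{m}$-adic homotopy limit from Lemma~\ref{lem:homlim}.
First I would choose homogeneous elements $x_1,\ldots,x_r\in\pi_*(X)$
whose images form a $\pi_*(K)$-basis of $\pi_*(K\wedge_E X)$; this is
possible precisely because the latter is finite-dimensional. These
define a map $\psi\: F\lra X$ out of a finite wedge of sphere-modules
$F=\bigvee_j\Sigma^{\nu_j}E$, and by construction $\psi$ becomes an
isomorphism after applying $\pi_*(K\wedge_E-)$, \ie, $\psi$ is a
$K$-equivalence.

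The key step is to promote this $K$-equivalence into the statement that
$\psi$ is surjective on $\pi_*$, or more precisely that the $x_i$
generate $\pi_*(X)$ as a $\pi_*(E)$-module. Here the $L$-complete form
of Nakayama's lemma does the work: for an $L$-complete $\pi_*(E)$-module,
a family of elements generates provided their images generate the
reduction modulo $\mathfrak{m}$. Since $\pi_*(X)$ is $L$-complete by
Lemma~\ref{lem:L-complete}, and since the calculation
\[
\pi_*(X)/\mathfrak{m}\,\pi_*(X)\hookrightarrow
          \pi_*(K\wedge_E X)
\]
identifies the reduction (using the K\"unneth isomorphism and the
$\mathfrak{m}$-adic tower exactly as in the proof of
Theorem~\ref{thm:oddprimes}), the chosen $x_i$ reduce to a generating
set modulo $\mathfrak{m}$. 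The $L$-complete Nakayama lemma
of~\cite[theorem~A.6]{HS} then forces them to generate $\pi_*(X)$, so
$\pi_*(X)$ is finitely generated over $\pi_*(E)$.

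Once finite generation is in hand, the final clause follows from the
remark preceding Lemma~\ref{lem:E-localfiniteness}: since $\pi_*(E)$ is
a regular local ring, any finitely generated $\pi_*(E)$-module is the
homotopy of a retract of a finite cell $E$-module. Concretely, I would
build $\psi\:F\lra X$ as above, form its cofibre $C$, observe that
$K\wedge_E C\simeq *$ by construction, and use Remark~\ref{rem:SorE}
together with $K$-locality of $X$ to conclude that $C\simeq *$ after
localization, so that $LX$ is a retract of the finite cell module $LF$.

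The main obstacle will be the passage from the $K$-equivalence to
honest finite generation of $\pi_*(X)$, \ie, verifying that the
$L$-complete Nakayama lemma applies in exactly this $\Z/2$-graded
setting and that the reduction $\pi_*(X)/\mathfrak{m}\pi_*(X)$ really
injects into $\pi_*(K\wedge_E X)$. This requires the same careful
Bockstein/short-exact-sequence bookkeeping along the tower
\eqref{eqn:m-adictower} that appears in
Theorem~\ref{thm:oddprimes}; the delicate point is controlling the
$\lim^1$ and torsion contributions so that the reduction is computed
correctly, after which the completeness hypothesis of
Lemma~\ref{lem:L-complete} closes the argument.
```
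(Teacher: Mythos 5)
Your overall strategy --- reduce modulo the maximal ideal, lift generators, and invoke an $L$-complete Nakayama lemma --- is the same engine that drives the paper's proof, but the one-shot way you deploy it contains a genuine gap. The very first step already fails: you cannot in general choose $x_1,\ldots,x_r\in\pi_*(X)$ whose images form a $\pi_*(K)$-basis of $\pi_*(K\wedge_E X)$, because the natural map $\pi_*(X)\to\pi_*(K\wedge_E X)$ need not hit a generating set. The example $X=K$ makes this concrete: $\pi_*(K\wedge_E K)$ has dimension $2^n$ over $\pi_*(K)$ (exterior Bockstein classes), while the image of $\pi_*(K)$ is one-dimensional; the map $\psi$ built from lifts therefore cannot be a $K$-equivalence, and since $F$ and $X$ are both $K$-local, your cofibre argument would wrongly conclude that every such $X$ is a finite wedge of suspensions of $E$. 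The fallback you propose --- showing $\pi_*(X)/\mathfrak{m}\pi_*(X)$ injects into $\pi_*(K\wedge_E X)$ and lifting a basis of the former, which does lift since it is a quotient of $\pi_*(X)$ --- would suffice for Nakayama, but that injection is exactly the point you leave unproved, and it is not formal: for a single regular element $u$ the long exact sequence does give $\pi_*(Y)/u\hookrightarrow\pi_*(Y/u)$, but reducing an injection $A\hookrightarrow B$ modulo a further element $v$ has kernel $(A\cap vB)/vA$, so the iterated statement for the whole sequence $p,u_1,\ldots,u_{n-1}$ does not follow by ``bookkeeping'' alone.

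The paper sidesteps both problems by running the argument one regular element at a time, by downward induction on $i$ for $X/(p,u_1,\ldots,u_i)$. At each stage only the single-element injection $\pi_*(Y)/u_i\hookrightarrow\pi_*(Y/u_i)$ is needed (immediate from the cofibre long exact sequence), Noetherianness of $\pi_*(E)$ gives finite generation of that quotient, and $L$-completeness of the cokernel $N$ of the lifted map $F_*\to\pi_*(Y)$ together with $u_iN=N$ forces $N=0$. If you reorganize your proof this way --- prove ``$\pi_*(Y/u_i)$ finitely generated implies $\pi_*(Y)$ finitely generated'' for one element at a time and iterate --- your Nakayama idea goes through. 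The final clause is then pure commutative algebra over the regular local ring $\pi_*(E)$, as in the remark preceding Lemma~\ref{lem:E-localfiniteness}; it should not be derived from triviality of the cofibre of $\psi$.
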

\begin{proof}
We prove that $\pi_*(E/(p,u_1,\ldots,u_i) \wedge_E X)$ is
finitely generated over $\pi_*(E)$ by downward induction
on~$i$. The case $i=n-1$ is guaranteed by the assumption
because $\pi_*(K)$ is finitely generated over $\pi_*(E)$.

We set
\[
X/(p,u_1,\ldots, u_i) = E/(p,u_1,\ldots,u_i) \wedge_E X.
\]
Then there is a long exact sequence of homotopy groups
induced by the cofibre sequence
\[
X/(p,u_1,\ldots, u_{i-1}) \xrightarrow{u_i}
X/(p,u_1,\ldots, u_{i-1}) \lra  X/(p,u_1,\ldots, u_{i}).
\]

Denoting the annihilator of $u_i$ in $\pi_*(X/(p,u_1,\ldots,u_{i-1}))$
by
\[
\ann(u_i,\pi_*(X/(p,u_1,\ldots,u_{i-1})),
\]
the long exact sequence yields the short exact sequence
\begin{multline*}
0 \ra \pi_*(X/(p,u_1,\ldots,u_{i-1}))/u_i
       \lra \pi_*(X/(p,u_1,\ldots,u_{i})) \\
       \lra \ann(u_i,\pi_*(X/(p,u_1,\ldots,u_{i-1})) \ra 0.
\end{multline*}
Since it injects into $\pi_*(X/(p,u_1,\ldots,u_{i}))$,
$\pi_*(X/(p,u_1,\ldots,u_{i-1}))/u_i$ is finitely generated. Hence
there is a free $\pi_*(E)$-module $F_*$ together with a map
\[
f\:F_* \lra \pi_*(X/(p,u_1,\ldots,u_{i-1}))
\]
such that the induced map
\[
F_* \lra \pi_*(X/(p,u_1,\ldots,u_{i-1}))/u_i
\]
is surjective. Let $N$ be the cokernel of $f$. The diagram
of exact sequences
\[
\xymatrix{
{F_*} \ar[r]^(0.3){f}\ar[d]_{u_i} & {\pi_*(X/(p,u_1,\ldots,u_{i-1}))}
\ar[d]_{u_i} \ar[r] & {N} \ar[d]_{u_i}\\
{F_*} \ar[r]^(0.3){f} \ar[d] & {\pi_*(X/(p,u_1,\ldots,u_{i-1}))} \ar[d]
 \ar[r] & {N} \ar[d] \\
{F_*/u_i} \ar[r]^(0.3){\bar{f}} & {\pi_*(X/(p,u_1,\ldots,u_{i-1}))/u_i}
\ar[r] & {0}
}
\]
tells us that $u_iN = N$ and $L$-completeness of $N$ implies that
$N=0$. Hence the map $F \lra \pi_*(X/(p,u_1,\ldots,u_{i-1}))$ is
surjective and so $\pi_*(X/(p,u_1,\ldots,u_{i-1}))$ is finitely
generated over $\pi_*(E)$ for all $0 \leq i \leq n-1$.

As $\pi_*(E)$ is a regular local ring this implies that $X$
is a retract of a finite cell $E$-module.
\end{proof}

\begin{lem} \label{lem:kgalois}
Let $G$ be a finite group. If $E \lra B$ is a $K(n)$-local
$G$-Galois extension, then
\begin{equation*}
\pi_*(K) \lra \pi_*(K \wedge_E B)
\end{equation*}
is a $G$-Galois extension.
\end{lem}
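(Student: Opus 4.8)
The plan is to establish the two defining conditions of a $G$-Galois extension for the pair $\pi_*(K)\lra\pi_*(K\wedge_E B)$: the unramified condition and the homotopy fixed-point condition. The starting observation is that a $K(n)$-local $G$-Galois extension $E\lra B$ satisfies, by definition, that $h\colon B\wedge_E B\lra F(G_+,B)$ is a $K(n)$-equivalence, and by Remark~\ref{rem:SorE} this is equivalent to $K\wedge_E h$ being an equivalence. The first step is therefore to deduce that $K\wedge_E B$ is dualizable, and hence (via the chain Lemma~\ref{lem:E-localfiniteness} $\to$ Lemma~\ref{lem:F-small} $\to$ Lemma~\ref{lem:fdimvs} $\to$ Lemma~\ref{lem:fingen}) that $\pi_*(K\wedge_E B)$ is a finite-dimensional $\pi_*(K)$-vector space and that $B$ is a retract of a finite cell $E$-module.

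Once $B$ is known to be a retract of a finite cell $E$-module, the second step is to feed this into the base-change machinery. Since $\pi_*(K)=\Fc_p[u,u^{-1}]$ is a graded field, Proposition~\ref{prop:TopGalois-basechange-field} applies directly: smashing the $K(n)$-local Galois extension $E\lra B$ with $K$ yields that $\pi_*(K\wedge_E B)/\pi_*(K)$ is an algebraic $G$-Galois extension. The main work is thus to verify that the hypotheses of Proposition~\ref{prop:TopGalois-basechange-field} are genuinely met $K(n)$-locally --- in particular that $K$ is a retract of a finite cell $E$-module (immediate, as $K=E/\mathfrak{m}$ with $\mathfrak{m}$ generated by a regular sequence) and that $\pi_*(K)$ is a graded field.

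The delicate point, and the one I expect to be the main obstacle, is that Proposition~\ref{prop:TopGalois-basechange-field} was stated for an honest (non-local) $G$-Galois extension $B/A$, whereas here $E\lra B$ is only $K(n)$-locally Galois; the maps $i$ and $h$ are $K(n)$-equivalences rather than equivalences. The key reduction is that after smashing with the residue field $K$, every $K(n)$-equivalence becomes an honest equivalence of $K$-modules. Concretely, the homotopy fixed-point computation $(C\wedge_A B)^{\h G}\simeq C\wedge_A B^{\h G}$ used in that proof goes through once $C=K$ is a finite retract, because $K\wedge_E(-)$ carries the $K(n)$-local fixed-point condition to the integral one, and the unramified chain of isomorphisms only uses the $K$-based K\"unneth isomorphism~\eqref{eqn:KForm-Kn}, which is insensitive to localization. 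I would state this reduction explicitly and then invoke Proposition~\ref{prop:TopGalois-basechange-field} verbatim.

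Assembling these, the proof reads: the $K(n)$-local Galois hypotheses plus Remark~\ref{rem:SorE} give dualizability of $K\wedge_E B$; Lemmas~\ref{lem:E-localfiniteness}--\ref{lem:fingen} then promote $B$ to a retract of a finite cell $E$-module with $\pi_*(K\wedge_E B)$ finite-dimensional over the graded field $\pi_*(K)$; and finally Proposition~\ref{prop:TopGalois-basechange-field} (applied with $C=K$) delivers that $\pi_*(K)\lra\pi_*(K\wedge_E B)$ is an algebraic $G$-Galois extension, which is exactly the assertion. I would keep the verification that $\pi_*(K\wedge_E B^{\h G})\iso\pi_*(K)$ --- the unramified dimension count $k^2=k|G|$ from that proposition's proof --- as the one genuinely load-bearing calculation, since it is what forces the extension to have the correct rank $|G|$ rather than merely being separable.
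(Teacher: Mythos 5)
Your proposal is correct and takes essentially the same route as the paper: both use Remark~\ref{rem:SorE} to convert the $K(n)$-local Galois conditions (the cofibres of $h$ and $i$ being $K$-acyclic) into honest equivalences after smashing with $K$ over $E$, and then run the argument of Proposition~\ref{prop:TopGalois-basechange-field}, with the finiteness of $\pi_*(K\wedge_E B)$ over $\pi_*(K)$ supplied by the dualizability/smallness chain of Lemmas~\ref{lem:E-localfiniteness}--\ref{lem:fingen}. The paper's own proof is just a terser version of this ("mimic the proof of Proposition~\ref{prop:TopGalois-basechange-field}"), so no further comparison is needed.
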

\begin{proof}
Let $C_h$ be the cofibre of $h\: B \wedge_E B \lra \prod_G B$
in the category of $E$-module spectra. We know that
$K \wedge C_h \simeq*$ and with the facts mentioned in
Remark~\ref{rem:SorE} we see  that this is equivalent to
$K \wedge_E C_h \simeq *$. The argument for the map
$i \: E \lra B^{hG}$ is similar. With these two facts at
hand we can mimic the proof of
Proposition~\ref{prop:TopGalois-basechange-field} to obtain
the result.
\end{proof}
\begin{thm}\label{thm:oddprimes-K(n)-local}
\begin{itemize}
\item[]
\item[(a)]
For an odd prime $p$ and a finite group $G$, every $K(n)$-local
$G$-Galois extension of $E$ is non-connected.
\item[(b)]
For $p=2$ and $G$ a finite group possessing a cyclic quotient,
every $K(n)$-local $G$-Galois extension of $E$ is non-connected.
\end{itemize}
\end{thm}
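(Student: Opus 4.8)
The plan is to reduce the $K(n)$-local statements to the non-local results already established in Theorems~\ref{thm:oddprimes} and~\ref{thm:CycQuot}, using the finiteness machinery built up in this section. The key observation is that a $K(n)$-local $G$-Galois extension $E \lra B$ is only known \emph{a priori} to be $K$-locally well-behaved, so the first task is to upgrade $B$ from an object of $\mathcal{D}_{E,K}$ to something genuinely finite over $E$. First I would invoke Lemma~\ref{lem:kgalois} to see that $\pi_*(K) \lra \pi_*(K \wedge_E B)$ is an (algebraic) $G$-Galois extension of graded rings; since $\pi_*(K)$ is a graded field, Lemma~\ref{lem:projective} forces $\pi_*(K \wedge_E B)$ to be a finite-dimensional $\pi_*(K)$-vector space. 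This is precisely the hypothesis needed to feed $B$ into Lemma~\ref{lem:fingen}, which then guarantees that $B$ is a retract of a finite cell $E$-module.

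Once $B$ is known to be a retract of a finite cell $E$-module, the base-change Proposition~\ref{prop:TopGalois-basechange-field} and the residue-field calculations of Section~\ref{sec:ResFld} become available to $B$ exactly as they were in the non-local setting. At this point the strategy is simply to run the arguments of the earlier theorems verbatim. For part~(a), with $p$ odd, the proof of Theorem~\ref{thm:oddprimes} applies: the graded separability of $\pi_*(K \wedge_E B)$ together with the odd-characteristic vanishing $z^2=0$ forces the odd part to vanish, the Bockstein argument descending through $E/(p,u_1,\ldots,u_j)$ identifies $\pi_0(K \wedge_E B)$ with $\pi_0(B)/\mathfrak{m}$, and the separable closedness of $\Fc_p$ produces the complete splitting $\prod_G \Fc_p$. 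Lifting idempotents along the complete local ring $\pi_0(E)$ then splits $B$ as $\prod_G E$, so $B$ is non-connected whenever $G$ is non-trivial.

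For part~(b), with $p=2$ and $G$ admitting a cyclic quotient $G/N$ of prime order, I would replay the proof of Theorem~\ref{thm:CycQuot} applied to the intermediate extension $B^{\h N}/E$. The same finiteness input is required: one needs $B^{\h N}$ (equivalently its residue-field smash $K \wedge_E B^{\h N}$) to be finite enough that Lemma~\ref{lem:dim} and the idempotent-lifting arguments apply. The character-eigenspace argument in Case~(A) and the $p$-group fixed-point argument (via~\cite[proposition~17]{SS}) in Case~(B) go through unchanged, showing $B^{\h N}$ splits and hence, by Lemma~\ref{lem:split}, that $B$ itself is non-connected.

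The main obstacle, and the reason this section exists at all, is the very first step: establishing that a merely $K(n)$-local Galois extension is automatically a retract of a finite cell $E$-module. In the non-local theorems this finiteness was built into the definition of the extension, whereas here it must be extracted from the $K$-local data. The delicate point is verifying that $\pi_*(K \wedge_E B)$ is finite-dimensional and then propagating that finiteness down the $\mathfrak{m}$-adic filtration using $L$-completeness (Lemmas~\ref{lem:L-complete} and~\ref{lem:fingen}); the Nakayama-style conclusion $u_i N = N \Rightarrow N = 0$ relies essentially on $L$-completeness rather than ordinary completeness, and this is precisely where Hovey's contribution enters. Once the finiteness hurdle is cleared, the homotopy-theoretic content of both parts is supplied entirely by the earlier theorems.
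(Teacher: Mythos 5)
Your overall strategy coincides with the paper's: both proofs reduce to Theorems~\ref{thm:oddprimes} and~\ref{thm:CycQuot} once $B$ is known to be a retract of a finite cell $E$-module, and both reach that conclusion through Lemma~\ref{lem:fingen}. The one genuine divergence is how you obtain the finite-dimensionality of $\pi_*(K\wedge_E B)$ over $\pi_*(K)$ that Lemma~\ref{lem:fingen} requires. The paper gets it from dualizability of $B$ in $\mathcal{D}_{E,K}$ via the chain of smallness lemmas (Lemmas~\ref{lem:E-localfiniteness}, \ref{lem:F-small} and~\ref{lem:fdimvs}), whereas you extract it algebraically: Lemma~\ref{lem:kgalois} gives a $G$-Galois extension of the graded field $\pi_*(K)$, and Lemma~\ref{lem:projective} then forces finite generation, hence finite dimension. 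This is a legitimate and arguably more economical route --- indeed only the unramified half of the Galois package (which follows from $K\wedge_E C_h\simeq *$ and the K\"unneth isomorphism over the graded field $\pi_*(K)$, with no finiteness input) is needed for the separability-idempotent argument behind Lemma~\ref{lem:projective}. Be aware, however, of a latent circularity if you unwind Lemma~\ref{lem:kgalois} rather than citing it as a black box: its proof mimics Proposition~\ref{prop:TopGalois-basechange-field}, whose verification of the algebraic fixed-point condition already assumes $\pi_*(K\wedge_E B)$ is finite-dimensional (this is exactly where the paper's dualizability discussion enters). So you should either quote only the unramified clause when invoking Lemma~\ref{lem:projective}, or accept that the dualizability machinery you bypass is still doing work inside Lemma~\ref{lem:kgalois}. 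Your treatment of part~(b), requiring the same finiteness for $B^{\h N}$ before rerunning Theorem~\ref{thm:CycQuot} and concluding with Lemma~\ref{lem:split}, matches the paper.
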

\begin{proof}
From Lemma~\ref{lem:kgalois} we know that
$\pi_*(K) \lra \pi_*(K \wedge_E B)$ is a $G$-Galois extension.
As a Galois extension $B$ is dualizable in $\mathcal{D}_{E,K}$
and the finiteness discussion above ensures that it is in fact
a retract of a finite cell-$E$-module spectrum. We can therefore
transfer our proofs to the $K(n)$-local setting, to show that
$\pi_0(K) \lra \pi_0(K \wedge_E B)$ splits and we can lift the
corresponding idempotents to $\pi_0(E) \lra \pi_0(B)$.
\end{proof}


\begin{thebibliography}{EKMM}\frenchspacing


\bibitem[A]{A}
V. Angeltveit,
Topological Hochschild homology and cohomology of $A_\infty$
ring spectra,
Geom. Top.  \textbf{12} (2008), 987--1032.

\bibitem[BL]{BL}
A.~Baker \& A.~Lazarev,
On the Adams Spectral Sequence for $R$-modules,
Algebraic \& Geometric Topology \textbf{1} (2001), 173--99.

\bibitem[BR:1]{BR:Gamma}
A. Baker \& B. Richter,
On the $\Gamma$-cohomology of rings of numerical polynomials
and $E_\infty$ structures on $K$-theory,
Comm. Math. Helv. \textbf{80} (2005), 691--723.

\bibitem[BR:2]{BR:Invt}
\bysame,
Invertible modules for commutative $\mathbb{S}$-algebras
with residue fields,
manuscripta math. \textbf{118} (2005), 99--119.

\bibitem[BR:3]{BR:Gal}
\bysame,
Realizability of algebraic Galois extensions by strictly
commutative ring spectra,
Trans. Amer. Math. Soc. \textbf{359} (2007), 827--857.

\bibitem[CHR]{CHR}
S. U. Chase, D. K. Harrison \& A. Rosenberg,
Galois theory and Galois cohomology of commutative rings,
Mem. Amer. Math. Soc. \textbf{52} (1965), 15--33.


\bibitem[DeMI]{DeMI}
F. DeMeyer \& E. Ingraham,
Separable Algebras over Commutative Rings,
Lect. Notes in Math. \textbf{181} (1971).

\bibitem[DH]{DH}
E. S. Devinatz \& M. J. Hopkins,
Homotopy fixed point spectra for closed subgroups of the
Morava stabilizer groups,
Topology \textbf{43} (2004), 1--47.

\bibitem[E]{E:CA}
D. Eisenbud,
\emph{Commutative algebra with a view toward algebraic
geometry},
Springer-Verlag (1995).

\bibitem[EKMM]{EKMM}
A.~Elmendorf, I.~Kriz, M.~Mandell \& J.~P.~May,
\emph{Rings, modules, and algebras in stable homotopy theory},
Mathematical Surveys and Monographs \textbf{47} (1997).


\bibitem[GH]{GH}
P. G. Goerss \& M. J. Hopkins,
Moduli spaces of commutative ring spectra,
in \emph{Structured ring spectra},
London Math. Soc. Lecture Note Ser. \textbf{315},
Cambridge Univ. Press (2004), 151--200.

\bibitem[GHMR]{GHMR}
P. G. Goerss,  H-W. Henn, M. Mahowald \& C. Rezk,
A resolution of the $K(2)$-local sphere at the prime~$3$,
Ann. of Math. \textbf{162} (2005),  777--822.

\bibitem[Ho]{Ho:colim}
M. Hovey,
Morava $E$-theory of filtered colimits,
Trans. Amer. Math. Soc. \textbf{360} (2008), 369--382.

\bibitem[HS]{HS}
M. Hovey \& N. P. Strickland,
Morava $K$-theories and localisation,
Mem. Amer. Math. Soc. \textbf{139} no. 666, (1999).

\bibitem[Re]{Re}
C. Rezk,
Notes on the Hopkins-Miller theorem,
in \emph{Homotopy theory via algebraic geometry and group
representations (Evanston, IL, 1997)},
Contemp. Math. \textbf{220},
Amer. Math. Soc., Providence, RI (1998), 313--366.

\bibitem[RR]{RR}
B.~Richter \& A.~Robinson,
Gamma-homology of group algebras and of polynomial algebras,
Contemp. Math. \textbf{346} (2004), 453--461.

\bibitem[R]{R}
J. Rognes,
Galois extensions of structured ring spectra,
Mem. Amer. Math. Soc. \textbf{192} no.~898 (2008), 1--97.

\bibitem[SVW]{SVW}
R.~Schw\"anzl, R.~M.~Vogt \& F.~Waldhausen,
Adjoining roots of unity to $E_\infty$ ring spectra in
good cases -- a remark,
Contemp. Math. \textbf{239} (1999), 245--249.

\bibitem[Sh]{SS}
S. Shatz,
\emph{Profinite Groups, Arithmetic and Geometry},
Annals of Mathematics Studies \textbf{67} (1972),
Princeton University Press.

\bibitem[We]{We}
C.~A.~Weibel,
\emph{An introduction to homological algebra},
Cambridge Studies in Advanced Mathematics
\textbf{38} (1994),
Cambridge University Press, Cambridge.

\bibitem[W\"u]{Wu}
S. W\"uthrich,
Infinitesimal thickenings of Morava $K$-theories,
J. Pure Appl. Algebra \textbf{212} (2008), 99--121.

\end{thebibliography}
\end{document}